\author{Shai Sarussi}
 \title{Alexandroff Topology of Algebras over an Integral Domain}
\date{}
\begin{document}

\newtheorem{thm}{Theorem}[section]
\newtheorem{cor}[thm]{Corollary}
\newtheorem{lem}[thm]{Lemma}
\newtheorem{prop}[thm]{Proposition}
\newtheorem{ax}{Axiom}

\theoremstyle{definition}
\newtheorem{defn}[thm]{Definition}%[section]

\theoremstyle{remark}
\newtheorem{rem}[thm]{Remark}%[section]
\newtheorem{ex}[thm]{Example}
\newtheorem*{notation}{Notation}
\newtheorem{conjecture}[thm]{Conjecture}

\newcommand{\qv}{{quasi-valuation\ }}
\newcommand{\s}{{$\mathbb S$\ }}

\def\twoheaddownarrow{\rlap{$\downarrow$}\raise-.4ex\hbox{$\downarrow$}}

\def\twoheaduparrow{\rlap{$\uparrow$}\raise-.4ex\hbox{$\uparrow$}}

%\newcommand\twoheaduparrow{\mathrel{\rotatebox[origin=c]{90}{$\twoheadrightarrow$}}}

%\centerline{\bf Quasi-Valuations Extending a Valuation}\rm
%\vskip8pt

\maketitle

\begin{abstract} {Let $S$ be an integral domain with field of fractions $F$ and let $A$ be an $F$-algebra.
An $S$-subalgebra $R$ of $A$ is called $S$-nice if $R$ is lying over $S$ and the localization of $R$ with respect to $S \setminus \{ 0 \}$ is $A$. Let $\mathbb S$ be the set of all $S$-nice subalgebras of $A$.
We define a notion of open sets on $\mathbb S$ which makes this set a $T_0$ Alexandroff space.
This enables us to study the algebraic structure of
$\mathbb S$ from the point of view of topology.
We prove that an irreducible subset of $\mathbb S$ has a supremum with respect to the specialization order. We present
equivalent conditions for an open set of $\mathbb S$ to be irreducible,
and characterize the irreducible components of $\mathbb S$.
%We show that under the specialization order $\mathbb S$ becomes a dcpo.
%Assuming $A$ is an affine $F$-algebra, we prove
%that the compact elements of $\mathbb S$ are precisely the $S$-nice subalgebras of $A$ that are finitely generated as an algebra over $S$.
%We then prove that $\mathbb S$ is an algebraic domain, and present some examples of immediate neighbors in $\mathbb S$.

%$T_0$ alexandroff topology on $\mathbb S$,
%and study some topological properties of this space.
%Let $\mathbb S$ be the set of all $S$-subalgebras of $A$ that are lying over $S$ and whose localizations with respect to $S \setminus \{ 0 \}$ is $A$
%(we call such algebras $S$-nice subalgebras of $A$).
%having an $S$-stable basis.
%The set of all $S$-nice subalgebras is denoted by $\mathcal S$ and is equipped with a notion
%of open sets which makes this space a $T_0$ Alexandroff space.

%We introduce a $T_0$ alexandroff topology on $\mathbb S$,
%and study some topological properties of $\mathbb S$.

%we define by up sets isnt it trivial that we get alexandroff but how one defines %Spec?

%whose basis is the set of all finitely generated $S$-nice subalgebras of $A$.

}\end{abstract}

\section{Introduction and some preliminary results}

As the title suggests, in this paper we discuss algebras over integral domains from the point of view
of Alexandroff topology, which will shortly be defined.
In [Sa1] and [Sa2] we studied algebras over valuation domains, concentrating on quasi-valuations that extend a valuation on a field.
In [Sa3] we prove several existence theorems of integral domains that may be applied to the study of quasi-valuations.
More specifically, let $S$ be an integral domain which is not a field, let
$F$ be its field of fractions, and let $A \neq F$ be an $F$-algebra.
% containing an $S$-stable basis;
In [Sa3] we study $S$-subalgebras of $A$, that are
lying over $S$ and whose localizations with respect to $S \setminus \{ 0 \}$ is $A$.
We call them $S$-nice subalgebras of $A$.
Namely, an $S$-subalgebra $R$ of $A$ is called an $S$-nice subalgebra of $A$ if
$R \cap F=S$ and $FR=A$;
we shall use this notation throughout this paper.
We denote by $\mathbb S$ the set of all $S$-nice subalgebras of $A$.

%So, the objects we deal with in this paper are algebraic in nature, and
%we study them from two additional points of view: topology and domain theory.

%which will be introduced shortly.

%We prove the existence of such $S$-subalgebras of $A$.
%We also show that there is no such minimal $S$-nice subalgebra of $A$.

%Assuming $A$ contains an $S$-stable basis, we prove the existence of $S$-nice subalgebras of $A$.
%We also show that any such $S$-subalgebra of $A$ is not
%minimal with respect to inclusion.

We recall now some definitions and results from [Sa3].
%Also, by [Sa***], $R  F=A$ iff $R$ contains a basis of $A$ over $F$.????
The following concept is used quite frequently:
let $B$ be a basis of $A$ over $F$. We say that $B$ is $S$-stable
if there exists a basis $C$ of $A$ over $F$ such that for all $c \in C$ and $b \in B$,
one has $cb \in \sum_{y \in B}  Sy$.

We note in [Sa3, Remark 3.4] that
if a basis $B$ is closed under multiplication then $B$ is $S$-stable. Thus, for example,
every free (noncommutative) $F$-algebra with an arbitrary set of generators has an $S$-stable basis;
in particular, every polynomial algebra with an arbitrary set of indeterminates over $F$ has an $S$-stable basis.

We also show in [Sa3, Proposition 3.12] that if $A$ is finite dimensional over $F$, then every basis of $A$ over $F$ is $S$-stable.
The first existence theorem is as follows.

\begin{thm} \label{existence of S-nice} (cf. [Sa3, Theorem 3.14]) If there exists an $S$-stable basis of $A$ over $F$, then there exists an $S$-nice
subalgebra of $A$.

\end{thm}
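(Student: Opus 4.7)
The plan is to exhibit an explicit $S$-nice subalgebra of $A$ built directly from the $S$-stable basis $B$ and its witness $C$. Set $M = \sum_{b \in B} Sb$, the free $S$-submodule of $A$ having $B$ as an $S$-basis, and define
\[ R = \{a \in A : aM \subseteq M\}, \]
the left order of $M$ inside $A$. I would then verify that $R$ fulfills the three requirements to be $S$-nice: it is an $S$-subalgebra of $A$, it satisfies $FR = A$, and it satisfies $R \cap F = S$.

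First, $R$ is routinely checked to be an $S$-subalgebra of $A$: closure under addition is clear, closure under multiplication follows from $(a_1 a_2) M = a_1(a_2 M) \subseteq a_1 M \subseteq M$, and $1 \in R$ trivially. Since $F$ is central in the $F$-algebra $A$, multiplication by any $s \in S$ preserves $M$, giving $S \subseteq R$.

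The $S$-stability hypothesis then enters to prove $FR = A$: for every $c \in C$ and $b \in B$, the assumption $cb \in \sum_{y \in B} Sy = M$ gives $cM \subseteq M$, so $C \subseteq R$. Since $C$ is an $F$-basis of $A$, we conclude $FR \supseteq FC = A$.

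The identity $R \cap F = S$ is the final point. For $\alpha \in F$, membership $\alpha \in R$ in particular forces $\alpha b \in M$ for every $b \in B$. But since $B$ is itself an $F$-basis of $A$, the unique expansion of $\alpha b$ in $B$ has coefficient $\alpha$ on $b$ and zero on every other basis element; hence $\alpha b \in M = \sum_{b' \in B} Sb'$ already forces $\alpha \in S$. The reverse inclusion $S \subseteq R \cap F$ was noted above. The main subtlety I anticipate is recognizing the right candidate for $R$: the naive choice $R = M$ fails in general because $S$-stability only controls products of the form $cb$ and not $b b'$, but passing to the left order of $M$ both cures this failure of closure and, via $C \subseteq R$, supplies enough elements to ensure $FR = A$.
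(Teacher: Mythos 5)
Your proof is correct: the left-order construction $R=\{a\in A: aM\subseteq M\}$ with $M=\sum_{b\in B}Sb$ is verified completely, and you use the hypotheses exactly where needed (centrality of $F$ in $A$ to get $S\subseteq R$ and $cM\subseteq M$ from $cb\in M$, uniqueness of expansion in the $F$-basis $B$ to get $R\cap F\subseteq S$, and $C\subseteq R$ to get $FR\supseteq FC=A$). Note that the paper itself does not reproduce a proof of this statement -- it is quoted from [Sa3, Theorem 3.14] -- so there is nothing here to compare against; your idealizer argument is the natural self-contained route, and your closing remark correctly identifies why the naive candidate $R=M$ would fail.
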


In particular, if $A$ is finite dimensional over $F$ then there exists an $S$-nice
subalgebra of $A$.

The following result is a going-down lemma for $S$-nice subalgebras.

\begin{lem} \label{going down} (cf. [Sa3, Lemma 3.20]) Let $S_1 \subseteq S_2$ be integral domains with field of fractions $F$ such that $S_2 \neq F$.
Assume that there exists an $S_1$-stable basis of $A$ over $F$.
Let $R$ be an $S_2$-nice subalgebra of $A$. Then there exists an $S_1$-nice subalgebra of $A$, which is contained in $R$. %$R_1 \subseteq R_2$.

\end{lem}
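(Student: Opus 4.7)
The plan is to construct an $S_1$-stable basis of $A$ over $F$ that lies inside $R$, and then invoke the construction from the proof of Theorem \ref{existence of S-nice} applied to this basis; the resulting $S_1$-nice subalgebra will automatically be contained in $R$, since it is generated over $S_1$ by elements of $R$, and $R$ is itself an $S_1$-subalgebra of $A$.

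Let $B$ be an $S_1$-stable basis of $A$ over $F$ with companion $C$, so that $cb \in \sum_{y \in B} S_1 y$ for every $c \in C$ and $b \in B$. Since $R$ is $S_2$-nice we have $FR = A$, equivalently $(S_2 \setminus \{0\})^{-1} R = A$; hence for each $b \in B$ I may choose $t_b \in S_2 \setminus \{0\}$ with $t_b b \in R$. Put $B' := \{t_b b : b \in B\} \subseteq R$, which is again a basis of $A$ over $F$.

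The heart of the argument is to verify that $B'$ is $S_1$-stable. Writing $cb = \sum_y u_{cby}\,y$ with $u_{cby} \in S_1$ and only finitely many nonzero terms, and using the fact that $t_b \in F \subseteq Z(A)$, one computes
\[
c \cdot (t_b b) \;=\; \sum_y \frac{t_b u_{cby}}{t_y}\,(t_y y),
\]
so $B'$ is $S_1$-stable with companion $C$ (or a suitable rescaling of $C$) precisely when the scalars $t_b u_{cby}/t_y$ lie in $S_1$. When $B$ is finite --- in particular whenever $A$ is finite-dimensional over $F$ --- I may take a common denominator $t \in S_2 \setminus \{0\}$ with $tB \subseteq R$; then all scalars collapse to $u_{cby} \in S_1$ and $C$ itself serves as companion. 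In the general (possibly infinite) case, the finite support of each expansion $cb = \sum_y u_{cby}y$ permits a delicate choice of the $t_b$, or equivalently the replacement of $C$ by a rescaled companion $C^* = \{\lambda_c c\}$ with $\lambda_c \in F^\times$ chosen to absorb the ratios $t_b/t_y$ appearing in the (finitely many per equation) nonzero terms.

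Once $B' \subseteq R$ is shown to be $S_1$-stable, applying the constructive proof of Theorem \ref{existence of S-nice} to $B'$ yields an $S_1$-nice subalgebra whose generators lie in $R$, and this subalgebra is therefore contained in $R$, completing the proof. The main obstacle is the middle step: arranging the rescaling of $B$ into $R$ so that $S_1$-stability is preserved, particularly in the infinite-dimensional case where a uniform common denominator need not exist.
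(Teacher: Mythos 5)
The paper itself gives no proof of this lemma (it is quoted from [Sa3, Lemma 3.20]), so your argument has to stand on its own, and it has two genuine gaps. The first is the one you concede: rescaling $B$ into $R$ destroys $S_1$-stability, and the proposed repairs do not obviously work. For a fixed $c\in C$ the single scalar $\lambda_c$ must satisfy $\lambda_c t_b u_{cby}/t_y\in S_1$ for every $b\in B$ and every $y$ in the support of $cb$; as $b$ ranges over an infinite basis this is an infinite family of divisibility constraints, and there is no reason a nonzero $\lambda_c\in F$ (or a compatible system of $t_b$'s, which are further constrained by $t_b b\in R$) exists. Since the finite-dimensional case is already handled by the remark before Theorem \ref{existence of S-nice}, the infinite-dimensional case is exactly the content of the lemma, and it is left unproved. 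The second gap is the last step: Theorem \ref{existence of S-nice} is a pure existence statement, and your claim that its construction, applied to a stable basis $B'\subseteq R$, produces a subalgebra ``generated over $S_1$ by elements of $R$'' and hence contained in $R$ is unjustified either way. If one takes the $S_1$-subalgebra generated by $B'$, there is no reason it is $S_1$-nice: stability controls products $cb$ with $c$ in the companion basis $C$, not products of elements of $B'$ with one another, so this algebra may meet $F$ in more than $S_1$. If instead the construction is the natural order-type one, $\{a\in A : aM'\subseteq M'\}$ with $M'=\sum_{y\in B'}S_1y$, then it is $S_1$-nice but need not be contained in $R$, even though $B'\subseteq R$.

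A route that closes both gaps is to rescale the companion basis $C$ rather than $B$, and to use denominators from $S_1$: since $F$ is also the field of fractions of $S_1$, each $c\in C$ can be written as $c=\sum_i f_i r_i$ with $f_i\in F$, $r_i\in R$, and clearing the denominators of the $f_i$ inside $S_1$ gives $s_c\in S_1\setminus\{0\}$ with $s_c c\in R$. Because $s_c\in S_1$, the rescaled companion $C'=\{s_c c\}$ still satisfies $c'b\in\sum_{y\in B}S_1 y$ for all $b\in B$, so $B$ remains $S_1$-stable and nothing delicate is needed. Now set $M=\sum_{y\in B}S_1 y$ and $R'=\{a\in R : aM\subseteq M\}$: this is a subring of $R$ containing $S_1$; comparing the coefficient of $b$ in $\alpha b$ for $\alpha\in F$ gives $R'\cap F=S_1$; and $C'\subseteq R'$, so $FR'\supseteq FC'=A$. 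Hence $R'$ is an $S_1$-nice subalgebra of $A$ contained in $R$. Your overall strategy (get stability data inside $R$, then run the construction behind Theorem \ref{existence of S-nice}) is in the right spirit, but as written the two key steps are asserted rather than proved.
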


We conclude that a minimal $S$-nice subalgebra of $A $ does not exist. More precisely, we prove

\begin{prop} \label{not minimal} (cf. [Sa3, Proposition 3.21]) Assume that there exists an $S$-stable basis of $A$ over $F$.
Let $R$ be an $S$-nice subalgebra of $A$. Then there exists an infinite decreasing chain
of $S$-nice subalgebras of $A $ starting from $R$. In particular, a minimal $S$-nice subalgebra of $A $ does not exist.

\end{prop}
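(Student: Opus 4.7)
The plan is to reduce the statement to the following sub-claim: for every $S$-nice subalgebra $R$ of $A$, there exists an $S$-nice subalgebra $R' \subsetneq R$. Once this is proved, iterating produces an infinite descending chain $R = R_0 \supsetneq R_1 \supsetneq R_2 \supsetneq \cdots$ of $S$-nice subalgebras starting from $R$, and the non-existence of a minimal $S$-nice subalgebra follows immediately.

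For the sub-claim I would fix a nonzero non-unit $s \in S$ (available since $S$ is not a field) and set
\[
R' \;:=\; S + sR \;=\; \{\, a + sr : a \in S,\ r \in R\,\}.
\]
The identity $(a_1+sr_1)(a_2+sr_2) = a_1 a_2 + s(a_1 r_2 + a_2 r_1 + s r_1 r_2) \in S + sR$ shows $R'$ is an $S$-subalgebra of $A$; the inclusion $sR \subseteq R'$ gives $FR' \supseteq F(sR) = FR = A$, so $FR' = A$; and $S \subseteq R' \subseteq R$ forces $S \subseteq R' \cap F \subseteq R \cap F = S$. Hence $R'$ is $S$-nice.

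The main obstacle is the strict inclusion $R' \subsetneq R$, which amounts to showing that $R/sR$ is not a cyclic $S/sS$-module generated by the image of $1$. I would establish this using the $S$-stable basis $B$ of $A$ over $F$ (the standing hypothesis) together with $A \neq F$, so $|B| \geq 2$: writing elements of $R$ via their coordinate projections $\pi_b : A \to F$ along $B$, and exploiting $FR = A$, one exhibits an element $r \in R$ whose projection along some $b \in B \setminus \{1\}$ lies in $\pi_b(R) \setminus s\,\pi_b(R)$; such an $r$ cannot have the form $a + sr_1$ with $a \in S$, $r_1 \in R$, because that would force $\pi_b(r) = s\,\pi_b(r_1) \in s\,\pi_b(R)$. (As a backup one may invoke Lemma~\ref{going down}, together with an auxiliary proper sub-domain of $S$ when one exists, to produce an $S$-nice subalgebra strictly inside $R$.) Once strict inclusion is available, setting $R_{n+1} := S + sR_n$ and using that $s \in F^\times$ acts as a non-zero-divisor on $A$ to propagate strictness through the induction gives the required infinite chain; the heart of the argument therefore lies in the first strict inclusion, which is where the $S$-stable basis hypothesis is genuinely used.
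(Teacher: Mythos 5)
Your reduction (find one proper $S$-nice subalgebra $R'\subsetneq R$, then iterate), the verification that $R'=S+sR$ is an $S$-nice subalgebra contained in $R$, and even the propagation-of-strictness step are all fine. The problem is the step you yourself call the heart of the argument: the strict inclusion $S+sR\subsetneq R$ is simply false in general, so no coordinate argument can establish it. Concretely, take $S=\mathbb{Z}$, $F=\mathbb{Q}$, $A=\mathbb{Q}[x]/(x^2)$ (finite dimensional over $F$, so an $S$-stable basis exists), and $R=\mathbb{Z}+\mathbb{Q}x$. Then $R$ is $S$-nice, but for every nonzero $s\in\mathbb{Z}$ one has $S+sR=\mathbb{Z}+s\mathbb{Z}+\mathbb{Q}x=R$. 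In the same example, with the basis $\{1,x\}$, the projection $\pi_x(R)=\mathbb{Q}$ is $s$-divisible, so the element $r\in R$ with $\pi_b(r)\in\pi_b(R)\setminus s\,\pi_b(R)$ that your sketch requires does not exist: the obstruction is that $R\cap F=S$ only controls the ``$F$-coordinate'' of $R$, while the other coordinate modules $\pi_b(R)$ can be arbitrary nonzero $S$-submodules of $F$, including $F$ itself. (There is also a smaller unjustified step: $\pi_b(a+sr_1)=s\,\pi_b(r_1)$ assumes $\pi_b(1)=0$, i.e.\ essentially that $1\in B$, which an $S$-stable basis need not satisfy.) A correct construction has to shrink the non-identity coordinates as well, e.g.\ in the example above $\mathbb{Z}+s^{n}\mathbb{Z}x$ gives the desired chain; this is where the $S$-stable basis is really needed, and it is not captured by $S+sR$.

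The backup route does not close the gap either: Lemma \ref{going down} applied to a proper subdomain $S_1\subset S$ with fraction field $F$ produces an $S_1$-nice subalgebra $R_1\subseteq R$, i.e.\ $R_1\cap F=S_1\neq S$, so $R_1\notin\mathbb S$ and it cannot serve as the next term of a chain of $S$-nice subalgebras; moreover such an $S_1$ need not exist at all (again $S=\mathbb{Z}$). Note also that the paper itself gives no proof of this proposition—it is quoted from [Sa3, Proposition 3.21]—so there is nothing in the present text to compare against; but as written your argument has a genuine gap precisely at the first strict inclusion.
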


%\begin{comment}

\begin{defn} Let $\mathcal C$ be a chain of prime ideals of $S$ and let $R$ be a faithful $S$-algebra.
Let $\mathcal D$ be a chain of prime ideals of $R$. We say that $\mathcal D$ covers $\mathcal C$ if for every $P \in \mathcal C$
there exists $Q \in \mathcal D$ lying over $P$; namely, $Q \cap S=P$.
%When $R$ is not faithful over $S$, writing $Q \cap S=P$ means
\end{defn}

\begin{thm} \label{every chain is covered commutative case} (cf. [Sa3, Theorem 3.24]) Assume that there exists an $S$-stable basis of $A$ over $F$.
Let $\mathcal C$ be a chain of prime ideals of $S$.
If $A$ is commutative then there exists an $S$-nice subalgebra $R$ of $A$ such that there exists a chain of prime ideals
$\mathcal F$ of $R$ covering $\mathcal C$. In fact, there exists an infinite descending chain of such
$S$-nice subalgebras of $A$.
\end{thm}

%\end{comment}

%We also show that
%if $A$ is commutative and $\mathcal C$ is any chain of prime ideals of $S$, %we apply results from valuation theory
%to prove the existence of an
%then there exists an $S$-nice subalgebra of $A$, having a chain of prime ideals covering $\mathcal C$;
%see [Sa3, Theorem 3.24])

Note that by Theorem \ref{existence of S-nice} and Proposition \ref{not minimal} if $A$ contains an $S$-stable basis then
$\mathbb S$ is not empty and is, in fact, infinite. In particular, by the above-mentioned note before Theorem \ref{existence of S-nice}, if $A$ is finite dimensional over $F$ then $\mathbb S$ is infinite.

In the second section of this paper, which is its main part, we do not assume that $A$ contains an $S$-stable basis;
we merely assume that $\mathbb S$ is not empty. We do, however, present an example in which the existence of an $S$-stable basis is assumed.
In the third section of this paper we assume that $A$ contains an $S$-stable basis

%We assume throughout this paper that $\mathbb S$ is not empty.
%In most of this paper, excluding *****, we do not assume that $A$ contains an $S$-stable basis.

We present now some of the common definitions we use from order theory.
Let $P$ be a set. A relation $\leq$ on $P$ that is reflexive and transitive is called a quasi order or a preorder;
if the relation is also antisymmetric then it is called a partial order and $P$ is called a partially ordered set, or a poset.
Let $L \subseteq P$. We say that $a \in P$ is a lower (resp. upper) bound of $L$ if $a \leq x$ (resp. $a \geq x$) for all $x \in L$.
If the set of lower (resp. upper) bounds of $L$ has a unique greatest (resp. smallest) element, this element is called
the greatest lower (resp. least upper) bound of $L$, and is denoted by $\text{inf}L$ (resp. $\text{sup}L$).
We say that $L$ is a lower set if $L=\{ y \in P \mid y \leq x \text{ for some } x \in L\}$.
We say that $L$ is an upper set if $L=\{ y \in P \mid y \geq x \text{ for some } x \in L\}$.
%For $a \in P$ we denote by $\downarrow a$ the principal lower set defined by $a$; namely,
%$\downarrow a =\{ x \in P \mid x \leq a\}$. Similarly, $\uparrow a =\{ x \in P \mid x \geq a\}$.
%Let $a,b \in P$ and assume that $a \leq b$ and there exists no $z \in P$ such that $z \neq a$, $z \neq b$, and $a \leq z \leq b$. In this case %we say that $a$ is an immediate predecessor of $b$, or that $b$ is an immediate successor of $a$; we also say that $a$ and $b$ are immediate %neighbors.
A subset $\emptyset \neq D \subseteq P$ is called directed if for all $a,b \in D$ there exists $c \in D$ such that $a \leq c$ and $b \leq c$.
We say that $P$ is a dcpo (directed complete partial order) if
every directed subset of $P$ has a supremum.
A subset $I$ of $P$ is called an ideal of $P$ if $I$ is a lower set and directed.
$P$ is called an inf semilattice (resp. sup semilattice) if for all $a,b \in P$, $\text{inf}\{a,b\}$ (resp. $\text{sup}\{a,b\}$) exists in $P$.
If $P$ is both an inf semilattice and a sup semilattice, we say that $P$ is a lattice.
A subset $L$ of $P$ is called a sublattice of $P$ if $L$ is a lattice with respect to the partial order of $P$.
Similarly, one defines inf subsemilattice and sup subsemilattice.

We briefly discuss now the notion of an Alexandroff topological space.
A topological space whose set of open sets is closed under arbitrary intersections is called an Alexandroff space,
after P. alexandroff who first introduced such topological spaces
in his paper [Al] from 1937.
Equivalently, A topological space is called an Alexandroff space if every element has a minimal open set containing it.
A finite topological space is the most important particular case of an Alexandroff space.
In fact, Alexandroff spaces share many properties with finite topological spaces;
in particular, Alexandroff spaces have all the properties of finite spaces relevant for the theory of digital topology
(see [He] and [Kr]).
Thus, %It is interesting to mention that
in the eighties the interest in Alexandroff spaces arose as a consequence of the
very important role of finite spaces in digital topology.
In 1999 F. G. Arenas studied the topological properties of Alexandroff spaces (see [Ar]).

Let $(T,\tau)$ be a topological space. For $X \subseteq T$ we denote by $clX$ the closure of $X$.
It is well known and easy to prove, that if one defines $x \leq_{\tau} y$
whenever $x \in cl \{ y \}$, then $ \leq_{\tau} $ is a quasi order; i.e., a reflexive and transitive relation.
$ \leq_{\tau}$  is called the specialization order.
Recall that $(T,\tau)$ is called $T_0$ if for every two distinct elements in $T$, there exists an open set containing one of them but not the other.
It is known that if $(T,\tau)$ is $T_0$ then $ \leq_{\tau} $ is a partial order.
On the other hand, for any quasi order $ \leq $ on a set $T$, one can define the topology whose open sets are the upper sets
of $T$ with respect to $ \leq $, denote it by $A_\leq$.
So, there are two functors, the specialization order from the class of all topological spaces to the class of all quasi ordered sets,
sending $(T,\tau)$ to $(T,\leq_\tau)$; and the functor in the opposite direction sending $(T,\leq)$ to $(T,A_\leq)$.
If one restrict the class of all topological spaces to the the class of all alexandroff topological spaces, then one has an isomorphism
between the categories.

%maybe some words about domain theory*************

In this paper the symbol $\subset$ means proper inclusion and the symbol $\subseteq$ means inclusion
or equality.

\section{The Alexandroff Topology}

%Say some words here**************

Inspired by the Zarisky topology on the prime spectrum of a ring,
for every $M \subseteq A$ we denote by $V(M)$ the set of all $S$-nice subalgebras of $A$
containing $M$. It is easy to see that $ V(\{0\})=\mathbb S$, $V(F)=\emptyset$, and for every $M_1,M_2 \subseteq A$, we have $$
V(M_1 \cup M_2)=V(M_1) \cap V(M_2).$$
Thus, the set B=$\{ V(M) \}_{M \subseteq A}$ is a basis for %the open sets of
a topology on $\mathbb S$. Namely, every open set
in $\mathbb S$ is a union of elements of $B$.
Moreover, for every set $\{M_i\}_{i \in I} $ of subsets of $ A$, we have $$V(\cup _{i \in I}M_i)= \cap_{i \in I} V(M_i).$$
Now, an intersection of union of elements of $B$ can be presented as a union of intersection of elements of $B$.
Indeed, let $I$ be a set and let $\{ J_i\}_{i \in I}$ be a set of sets such that for all $i \in I$ and $j_i \in J_i$ there exists a set  $ X_{i,j_i}$; then $$\bigcap_{i \in I} (\bigcup_{j_i \in J_i} X_{i,j_i})=\bigcup_{t \in \prod_{i \in I} J_i} (\bigcap_{i \in I} X_{i,t_i}),$$
where $t_i$ denotes the $i$-th component of $t$. We apply the above equation to elements of $B$ and deduce that every intersection of open sets of $\mathbb S$ is open. Thus, $\mathbb S$ is an Alexandroff topological space with respect to the topology defined above.

%It is well known that the category of Alexandroff $T_0$-space is isomorphic to the the category of partially ordered sets
%under the functor which takes the space $DD$ to the poset $DD$ where $\leq$ is the specialization order of $DD$.

Let $T$ be a topological space and let $x,y \in T$; then $x \in cl\{y\} $ iff for every open set $U$ containing $x$, we have $y \in U$.
If $T$ is alexandroff, this is equivalent to $y \in U_x$, where $U_x$ denotes the minimal open set containing $x$.
Now, let $R \in \mathbb S$. It is easy to see that the minimal open set containing $R$ is $V(R)$. Indeed, let $U$ be an open set containing $R$; then $U$ is of the form $\cup_{i \in I} V(M_i)$ where $M_i \subseteq A$ for all $i \in I$. Thus, $R \in V(M_i)$ for some $i \in I$; hence $M_i \subseteq R$ and $V(R) \subseteq V(M_i)$.
On the other hand, $V(R) $ is an open set containing $R$.
Thus, the specialization order on $\mathbb S$ is the order of inclusion; i.e., for $R_1, R_2 \in \mathbb S$, $R_1 \leq R_2  $ iff $R_1 \subseteq R_2  $.
Moreover, as in any alexandroff topological space, $U \subseteq \mathbb S$  is open iff $U$ is an upper set with respect to the specialization order;
dually, $C \subseteq \mathbb S$  is closed iff $C$ is a lower set.

%So, one has

%יש שכנים

% הסינגל טונים הפתוחים הם מרסימלים ולכן כל תת קבוצה של מקסימלים היא פתוחה
% יש טופולוגיה על קבוצת הקוזי פילטר הממשיכה V ספציפי
%לא נותרי?
%לא האוסדורף. 2 נימוקים: 1. קח R ואחד שמוכל בו לפי 2 משפטים שהוכחנו
%2. כל האוסדורף אלכסנדרוף הוא סופי בסתירה למסקנה שיש שרשרת אינסופית יורדת

%We shall freely use the fact that whenever
%$R_1 ,R_2 \in \mathbb S$  and $R$ is any $S$-algebra satisfying $R_1 \subseteq R \subseteq R_2$, then $R$ is an $S$-nice subalgebra of $A$.
%Moreover, by [Sa3, *****] $\mathbb S$ is closed under nonempty finite intersection.

We will frequently use the following four basic lemmas.

\begin{lem} \label{basic lemma} 1. Let $R_1 $ and $R_2 $ be two elements of $ \mathbb S$  and let $R$ be an $S$-algebra satisfying $R_1 \subseteq R \subseteq R_2$;
then $R$ is an $S$-nice subalgebra of $A$. \linebreak
2. Let $\{R_i\}$ be a finite subset of $\mathbb S$; then $ \cap_{1 \leq i \leq n} R_i \in \mathbb S$.

\end{lem}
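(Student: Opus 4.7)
The plan is to verify in each part the two defining conditions of $S$-niceness: $R \cap F = S$ and $FR = A$.

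For part 1, I would use a straightforward sandwich argument. Since $R_1 \subseteq R \subseteq R_2$, intersecting with $F$ gives $S = R_1 \cap F \subseteq R \cap F \subseteq R_2 \cap F = S$, forcing equality. For the localization condition, $FR \supseteq FR_1 = A$ already gives $FR = A$. The fact that $R$ is an $S$-subalgebra is assumed in the statement (and $S \subseteq R_1 \subseteq R$ in any case), so nothing further is needed.

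For part 2, let $R = \bigcap_{i=1}^n R_i$. It is automatically an $S$-subalgebra of $A$ as an intersection of $S$-subalgebras, and the intersection with $F$ distributes: $R \cap F = \bigcap_i (R_i \cap F) = \bigcap_i S = S$. The only delicate point is $FR = A$. Given $a \in A$, for each $i$ we have $a \in A = FR_i$, and since $F$ is central in $A$ (as $A$ is an $F$-algebra) and $F$ is the field of fractions of $S$, we may choose $s_i \in S \setminus \{0\}$ with $s_i a \in R_i$ by clearing denominators in an expression $a = \sum_j f_j r_j$. Setting $s = s_1 s_2 \cdots s_n$, which is nonzero since $S$ is an integral domain, and noting $s \in S \subseteq R_j$ for every $j$, we get $s a = \bigl(\prod_{j \neq i} s_j\bigr)(s_i a) \in R_i$ for every $i$. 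Hence $sa \in R$ and $a = s^{-1}(sa) \in FR$, which proves $FR = A$.

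The only step that is not purely formal is the clearing-of-denominators step in part 2, and this is really nothing more than an observation about localizing by central elements. Once one is comfortable that $FR_i$ coincides with the set $\{s^{-1} r : s \in S \setminus \{0\},\ r \in R_i\}$, the whole lemma reduces to routine set-theoretic manipulations with the two defining conditions of an $S$-nice subalgebra.
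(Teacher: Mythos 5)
Your proof is correct, and since the paper dismisses this lemma with the single word ``Straightforward,'' your argument is exactly the routine verification the author had in mind: check $R\cap F=S$ and $FR=A$ directly, with the only mildly nontrivial point being the common-denominator argument showing $FR_i=\{s^{-1}r : s\in S\setminus\{0\},\ r\in R_i\}$, which you handle correctly using that $S$ is a domain and each $R_i$ is an $S$-module.
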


\begin{proof} Straightforward.

\end{proof}

%The proof of the first statement is straightforward. The second statement was noted in [Sa3, Remark 3.6].

For subsets $M \subseteq A$ and $T \subseteq F$ we define $$T \cdot M = \{\sum_{1 \leq i \leq n}t_i m_i \mid t_i \in T, m_i \in M\}.$$
%$$FM=\{ \alpha m \mid m \in M, \alpha \in F\} $$
%The following lemma is easy to prove.

\begin{lem} \label{union satisfies basic properties} Let $K=\{R_i\}_{i \in I}$ be a nonempty subset of $\mathbb S$ and denote $R_0=\cup_{i \in I} R_i$. Then
the following three properties are valid:

(a) $R_0 \cap F=S$;

(b) $F \cdot R_0=A$; and

(c) If $R_0$ is closed under addition then $S \cdot R_0 \subseteq R_0$.

In particular, if $R_0$ is a ring then it is an $S$-nice subalgebra of $A$.
\end{lem}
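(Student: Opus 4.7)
The plan is to prove the three properties in order, each following essentially directly from the definition of an $S$-nice subalgebra, and then to assemble them into the final statement.

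For part (a), I would write $R_0 \cap F = (\bigcup_{i \in I} R_i) \cap F = \bigcup_{i \in I}(R_i \cap F)$, and use that $R_i \cap F = S$ for every $i \in I$ by the definition of $S$-niceness. Since $K$ is nonempty, the union collapses to $S$. For part (b), I would fix any $i_0 \in I$ (using $K \neq \emptyset$). Then $R_{i_0} \subseteq R_0 \subseteq A$ gives $A = F \cdot R_{i_0} \subseteq F \cdot R_0 \subseteq A$, forcing equality.

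For part (c), take $s \in S$ and $r \in R_0$; then $r \in R_j$ for some $j \in I$. Because $R_j$ is an $S$-subalgebra, $S \subseteq R_j$, so $sr \in R_j \subseteq R_0$. Hence every simple product $sr$ lies in $R_0$; an arbitrary element of $S \cdot R_0$ is a finite sum of such products, so the additive closure hypothesis places it in $R_0$.

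Finally, to conclude, suppose $R_0$ is a ring. Then $R_0$ is in particular closed under addition, so (c) applies, giving $S \cdot R_0 \subseteq R_0$; combined with closure under multiplication this makes $R_0$ an $S$-subalgebra of $A$ (note that $S \subseteq R_0$ is automatic because $S \subseteq R_i$ for any $i \in I$). Property (a) yields $R_0 \cap F = S$ and property (b) yields $F \cdot R_0 = A$, so $R_0 \in \mathbb{S}$.

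There is no real obstacle here; the lemma is essentially bookkeeping about what is preserved under unions. The only mildly nontrivial point is that (c) is not automatic from (a) and (b): additive closure of the union is required precisely because a product $sr$ with $s \in S$ and $r \in R_j$ stays inside the single $R_j$, but a generic element of $S \cdot R_0$ is a sum of such products coming from possibly different $R_j$'s, which need not a priori lie in any single $R_i$.
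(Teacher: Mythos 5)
Your proof is correct and follows essentially the same route as the paper's: (a) and (b) are handled identically (the paper argues (a) elementwise rather than via distributing the union over the intersection, which amounts to the same thing), and in (c) both arguments place each product $s_j r_j$ inside a single $R_{i_j}$ and invoke additive closure to handle the finite sums defining $S \cdot R_0$. Your explicit assembly of the ``in particular'' conclusion is a minor elaboration of what the paper leaves implicit.
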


\begin{proof} (a) Clearly, $S \subseteq R_0 \cap F$. Let $\alpha \in R_0 \cap F$; then $\alpha \in R_1$ for some $R_1 \in K$. Since $R_1 \cap F=S$, we have $\alpha \in S$.
(b) Since $K$ is not empty there exists $R_1 \in K$ such that $FR_1=A$; thus, $A= FR_1 \subseteq FR_0 \subseteq A$.
(c) Let $\{r_j\}_{1 \leq j \leq n} \subseteq R_0$ and let $\{s_j\}_{1 \leq j \leq n} \subseteq S$; then for all $1 \leq j \leq n$, $r_j \in R_{i_{j}}$ for appropriate $R_{i_{j}} \in K$. Thus, for all $1 \leq j \leq n$, $s_j r_j \in R_{i_j} \subseteq R_0$; hence, $S \cdot R_0 \subseteq R_0$.
\end{proof}

Note that $R_0$ is not necessarily an $S$-algebra since it is not necessarily a ring.

%The following two observations are important to our study.

\begin{lem} \label{sup of a chain exists} Let $C= \{  R_{i} \}_{i \in I}$ be a nonempty chain in $\mathbb S$. Then
the supremum of $C$ exists in $\mathbb S$.

\end{lem}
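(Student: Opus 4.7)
The natural candidate for the supremum is $R_0 = \bigcup_{i \in I} R_i$. Since the specialization order on $\mathbb{S}$ has already been identified with inclusion, once we show $R_0 \in \mathbb{S}$ it will automatically be the least upper bound: it contains every $R_i$ by construction, and any $S$-nice subalgebra containing all $R_i$ must contain their union.

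The plan is therefore to show that $R_0$ is an $S$-nice subalgebra. By Lemma \ref{union satisfies basic properties}, it suffices to check that $R_0$ is a ring, since properties (a), (b) there already give $R_0 \cap F = S$ and $F \cdot R_0 = A$, and property (c) together with closure under addition will give $S \cdot R_0 \subseteq R_0$. To see that $R_0$ is a ring, I would invoke the chain hypothesis: given $x, y \in R_0$, pick $R_i, R_j \in C$ with $x \in R_i$ and $y \in R_j$; since $C$ is a chain, one of $R_i, R_j$ is contained in the other, so both $x$ and $y$ lie in a common $R_k \in C$. Then $x - y$ and $xy$ belong to $R_k \subseteq R_0$. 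Since each $R_i$ contains $1$, so does $R_0$. Hence $R_0$ is a subring of $A$, and by Lemma \ref{union satisfies basic properties} we conclude $R_0 \in \mathbb{S}$.

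Finally, to verify $R_0 = \sup C$ with respect to the specialization order, recall that we showed $R_1 \leq R_2$ iff $R_1 \subseteq R_2$. Clearly each $R_i \subseteq R_0$, so $R_0$ is an upper bound. If $R' \in \mathbb{S}$ satisfies $R_i \subseteq R'$ for every $i \in I$, then $R_0 = \bigcup_{i \in I} R_i \subseteq R'$, so $R_0$ is the least upper bound.

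I do not foresee a genuine obstacle here; the only nontrivial point is the use of the chain hypothesis to ensure that the union is closed under the ring operations, which as Lemma \ref{union satisfies basic properties} emphasizes fails for arbitrary subsets of $\mathbb{S}$. Everything else reduces to the three properties already verified in that lemma and to the identification of the specialization order with inclusion.
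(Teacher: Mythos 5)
Your proposal is correct and follows essentially the same route as the paper: take $R_0=\bigcup_{i\in I}R_i$, use the chain hypothesis to see $R_0$ is a ring, apply Lemma \ref{union satisfies basic properties} to get $R_0\in\mathbb{S}$, and observe it is the least upper bound under inclusion. You simply spell out details (closure under the ring operations, the supremum verification) that the paper leaves as immediate.
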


\begin{proof} Let $R_0=\cup_{i \in I}R_{i}$; since $C$ is a chain, $R_0$ is ring.
By Lemma \ref{union satisfies basic properties}, %$R_0$ is an $S$-module satisfying $R_0 \cap F=S$ and $F \cdot R_0=A$; thus, 
$R_0 \in \mathbb S$. It is clear that $R_0$ is the smallest $S$-nice subalgebra of $A$ containing every element of $C$.

\begin{comment}
*******************************
Also, $R_{i_0} \subseteq R$ for some $i_0 \in I$ and thus,
since $R_{i_0}$ is $S$-nice, $A=R_{i_0}F \subseteq FR$.
Now, let $\alpha \in R \cap F$; then $\alpha \in R_{i_1}$, for some $R_{i_1} \in C$ and thus, since $R_{i_1}$
is $S$-nice, $\alpha \in S$. It is clear that $R$ is the smallest $S$-nice subalgebra of $A$ containing every element of $C$.

Let $R=\cup_{i \in I}R_{i}$; it is clear that $R$ is an $S$-algebra. Also, $R_{i_0} \subseteq R$ for some $i_0 \in I$ and thus,
since $R_{i_0}$ is $S$-nice, $A=R_{i_0}F \subseteq FR$.
Now, let $\alpha \in R \cap F$; then $\alpha \in R_{i_1}$, for some $R_{i_1} \in C$ and thus, since $R_{i_1}$
is $S$-nice, $\alpha \in S$. It is clear that $R$ is the smallest $S$-nice subalgebra of $A$ containing every element of $C$.

*****************************

\end{comment}
\end{proof}

\begin{lem} \label{a maximal element exists in an open=upper set} Let $U$ be a nonempty open set of $\mathbb S$. Then
there exists a maximal element in $U$, which is also a maximal element in $\mathbb S$.

\end{lem}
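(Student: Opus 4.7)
The plan is to apply Zorn's lemma to the set $V(R_0)$ for a chosen element $R_0 \in U$, leveraging Lemma \ref{sup of a chain exists} for the chain condition, and then use the fact that open sets in $\mathbb S$ are exactly the upper sets with respect to inclusion (the specialization order) to transfer maximality in $V(R_0)$ to maximality in $\mathbb S$.

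First, since $U$ is nonempty I would pick an arbitrary $R_0 \in U$. Because $U$ is open, $U$ is an upper set with respect to inclusion, so every $R \in \mathbb S$ with $R_0 \subseteq R$ automatically lies in $U$; in particular $V(R_0) \subseteq U$. So it suffices to find a maximal element of $V(R_0)$ and prove that it is already maximal in $\mathbb S$.

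Next, I would verify that $V(R_0)$ satisfies the hypothesis of Zorn's lemma. Let $C$ be a nonempty chain in $V(R_0)$. By Lemma \ref{sup of a chain exists} the supremum $R^{\ast} = \bigcup_{R \in C} R$ exists in $\mathbb S$, and since every element of $C$ contains $R_0$, we have $R_0 \subseteq R^{\ast}$, so $R^{\ast} \in V(R_0)$. Thus every chain in $V(R_0)$ has an upper bound inside $V(R_0)$, and Zorn's lemma produces a maximal element $M \in V(R_0)$. Because $V(R_0) \subseteq U$, this $M$ lies in $U$.

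Finally, I would verify that $M$ is maximal in $\mathbb S$. Suppose $M' \in \mathbb S$ with $M \subseteq M'$; then $R_0 \subseteq M \subseteq M'$, so $M' \in V(R_0)$, and maximality of $M$ within $V(R_0)$ forces $M = M'$. So $M$ is maximal in $\mathbb S$ as well. There is no substantive obstacle here: Lemma \ref{sup of a chain exists} already does the real work of supplying upper bounds for chains, and the open = upper set characterization packages the rest cleanly. The only subtlety worth being explicit about is that we apply Zorn to $V(R_0)$ rather than to $U$ itself, because this is what lets us conclude that the resulting maximal element is not merely maximal in $U$ but actually maximal in all of $\mathbb S$.
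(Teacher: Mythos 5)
Your proof is correct and follows essentially the same route as the paper: Zorn's Lemma, with Lemma \ref{sup of a chain exists} supplying upper bounds for chains and the open-set-equals-upper-set fact doing the rest. The only (cosmetic) difference is that you apply Zorn to $V(R_0)$ rather than to $U$ itself, which has the mild advantage of making explicit the claim that the maximal element is maximal in all of $\mathbb S$ --- a point the paper leaves implicit, since for an upper set $U$ any maximal element of $U$ is automatically maximal in $\mathbb S$.
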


\begin{proof} %By Zorn's Lemma, there exists a maximal chain in $U$, say $C$.
%By Lemma \ref{sup of a chain exists}, $\text{sup}C$ exists. Since $C$ is a maximal chain in $U$ and $U$ is an upper set of $\mathbb S$,
%we conclude that $\text{sup}C$ is a maximal element in $C$, in $U$ and
%in $\mathbb S$.
%Different approach-take this:
Consider $U$ with the partial order of containment. Let $C$ be a nonempty chain in $U$.
By Lemma \ref{sup of a chain exists}, $\text{sup}C$ exists in $\mathbb S$. Now, $\text{sup}C \supseteq R_1$ for some $R_1 \in U$;
thus, by the definition of an open set in $\mathbb S$,
%since $U$ is an upper set of $\mathbb S$,
$\text{sup}C \in U$. Therefore, by Zorn's Lemma, there exists a maximal element in $U$.

\end{proof}

\begin{rem} \label{in a maximal chain there exists a maximal element} In view of Lemma \ref{a maximal element exists in an open=upper set}, one can be more precise. In fact, for every maximal chain $C$ in a nonempty open set $U$ of $\mathbb S$ (there exists such a chain by Zorn's Lemma), $\text{sup}C=\cup_{R \in C}R$ is a maximal element of $C$, $U$ and $\mathbb S$.
Moreover, for the same reason, for any $R \in U$  there exists a maximal element $R' \in U$ containing $R$;
indeed, take any maximal chain in $U$ containing $R$, and use the reasoning above.

\end{rem}

%**** Consider the set $\mathbb S$ with the partial order of containment. We shall prove that the poset $(\mathbb S, \subseteq)$
%is an algebraic dcpo. *****

Let $U \subseteq \mathbb S$ be an open set. We consider the following properties:

(a) $U$ is of the form $V(R_0)$ for some $R_0 \in \mathbb S$.

(b) $U$ is closed under arbitrary nonempty intersection.

(c) $V(\cap_{R \in U} R) = U$.

(d) $V(\cap_{R \in U} R)  \subseteq U$.

(e) $U$ is closed under finite nonempty intersection; i.e., $U$ is an inf subsemilattice of $\mathbb S$.

(f) $U$ contains no more than one minimal element.

\begin{prop} \label{a implies b implies c for open U} Notation as above; the following implications hold:

$(a) \Leftrightarrow (b) \Rightarrow (c) \Leftrightarrow (d) \Rightarrow (e) \Rightarrow (f)$.
\end{prop}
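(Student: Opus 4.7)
The plan is to chase the implications in order, leaning throughout on Lemma \ref{basic lemma} and on the fact (established earlier in the section) that an open subset of $\mathbb{S}$ is exactly an upper set with respect to inclusion. I take $U$ to be nonempty; if $U=\emptyset$ then (b)--(f) hold vacuously while (a) fails because $V(R_0)$ always contains $R_0$, so this case is implicitly excluded.

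For $(a)\Rightarrow(b)$: given $U=V(R_0)$ and a nonempty family $\{R_i\}_{i\in I}\subseteq U$, fix $i_0\in I$ and note that $\bigcap_{i\in I}R_i$ is a subring of $A$ with $R_0\subseteq\bigcap_{i\in I}R_i\subseteq R_{i_0}$; Lemma \ref{basic lemma}(1) then places it in $\mathbb{S}$, and since it contains $R_0$ it lies in $V(R_0)=U$. For $(b)\Rightarrow(a)$: set $R_0=\bigcap_{R\in U}R$; by (b), $R_0\in U$, so $R_0\in\mathbb{S}$. Then $V(R_0)\supseteq U$ because $R_0$ is a lower bound for $U$, while $V(R_0)\subseteq U$ because $U$ is an upper set (open) containing $R_0$. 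The implication $(b)\Rightarrow(c)$ is then immediate with the same $R_0$.

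For $(c)\Leftrightarrow(d)$: every $R\in U$ contains $\bigcap_{R\in U}R$, so the inclusion $U\subseteq V(\bigcap_{R\in U}R)$ is automatic, and (c) reduces to (d). For $(d)\Rightarrow(e)$: given $R_1,R_2\in U$, Lemma \ref{basic lemma}(2) gives $R_1\cap R_2\in\mathbb{S}$, and $R_1\cap R_2\supseteq\bigcap_{R\in U}R$, hence $R_1\cap R_2\in V(\bigcap_{R\in U}R)\subseteq U$. For $(e)\Rightarrow(f)$: if $R_1,R_2$ are distinct minimal elements of $U$, then by (e) $R_1\cap R_2\in U$, and $R_1\cap R_2\subseteq R_1$ forces $R_1\cap R_2=R_1$ by minimality of $R_1$, so $R_1\subseteq R_2$; symmetrically $R_2\subseteq R_1$, contradicting distinctness.

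The main subtle step is $(b)\Rightarrow(a)$. The set-theoretic intersection $R_0=\bigcap_{R\in U}R$ is always a subring of $A$ with $R_0\cap F=S$, but the property $FR_0=A$ need not survive arbitrary intersection, so one genuinely needs (b) to place $R_0$ in $\mathbb{S}$ before the upper set characterisation of open sets can be invoked to finish. All the remaining implications are then routine applications of Lemma \ref{basic lemma} and the fact that $U$, being open, is upward closed.
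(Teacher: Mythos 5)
Your proof is correct and follows essentially the same route as the paper: $(a)\Rightarrow(b)$ via Lemma \ref{basic lemma}(1) and sandwiching the intersection, $(b)\Rightarrow(a),(c)$ via the smallest element $\bigcap_{R\in U}R$ together with $U$ being an upper set, the automatic inclusion $U\subseteq V(\bigcap_{R\in U}R)$ for $(c)\Leftrightarrow(d)$, Lemma \ref{basic lemma}(2) for $(d)\Rightarrow(e)$, and the minimality argument for $(e)\Rightarrow(f)$. Your explicit remarks about the empty-set edge case and about $F\cdot\bigcap_{R\in U}R=A$ possibly failing are sensible refinements but do not change the argument.
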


\begin{proof} $(a) \Rightarrow (b)$. By definition, every element of $U$ contains $R_0$.
Let $R'=\cap_{R \in W} R$ where $\emptyset \neq W \subseteq U$. Then $R_0 \subseteq R' \subseteq R''$ for some $R'' \in W$;
hence by Lemma \ref{basic lemma}, $R' \in \mathbb S$ and thus, by the definition of $U$, $R' \in U$.
$(b) \Rightarrow (a)$ and $(b) \Rightarrow (c)$. By assumption $\cap_{R \in U} R \in U \subseteq \mathbb S$, which is clearly the smallest element of $U$; since every $R \in U$ contains $\cap_{R \in U} R$ and $U$ is an open set, we get $U=V(\cap_{R \in U} R)$. %$V(\cap_{R \in U} R) \subseteq U$,
%$(b) \Rightarrow (c)$. By assumption $\cap_{R \in U} R \in U$; since $U$ is an open set, $V(\cap_{R \in U} R) \subseteq U$.
$(c) \Rightarrow (d)$ is trivial.
$(d) \Rightarrow (c)$. As above, every $R \in U$ contains $\cap_{R \in U} R$; thus, $V(\cap_{R \in U} R) \supseteq U$.
$(d) \Rightarrow (e)$. Let $R_1,R_2 \in U$; by Lemma \ref{basic lemma}, $R_1 \cap R_2 \in \mathbb S$;  clearly, $R_1 \cap R_2 \supseteq \cap_{R \in U} R$,
and thus $R_1 \cap R_2 \in U$.
%Let $\{ R_i\}_{i=1}^{n}$ be a finite subset of $U$. By ****$\cap_{1 \leq i \leq n} R_i \in$\s;  clearly, $\cap_{1 \leq i \leq n} R_i \supseteq %\cap_{R \in U} R$,
%and thus $\cap_{1 \leq i \leq n} R_i \in U$.
$(e) \Rightarrow (f)$. Assume to the contrary that there exists two different minimal elements $R_1 ,R_2$ in $U$.
Then, by assumption $R_1  \cap R_2 \in U$, but it is clearly strictly contained in both $R_1$ and $R_2$, a contradiction.

\end{proof}

We present now examples which demonstrate that the left to right implications in the previous proposition cannot be reversed.
%*******MISTAKE not S-nice needs a chain of ideals not intersecting 0********

\begin{ex} \label{left implications may not hold} To show that the implication $(c) \Rightarrow (b)$ may not hold, we can consider any case in which
$\mathbb S$ is taken as the open set and
$\cap_{R \in \mathbb S} R \notin \mathbb S$; it is clear that $V(\cap_{R \in \mathbb S} R)  = \mathbb S$.
As an explicit example, let $S= \mathbb{Z}$, $A=M_2(\mathbb{Q})$ and $U=\mathbb S$.
More generally, in [Sa3, discussion after Proposition 3.21] we showed that whenever
$A$ contains an $S$-stable basis, we have $\cap_{R \in \mathbb S} R \notin \mathbb S$.

To show that the implication $(e) \Rightarrow (d)$ may not hold, one can consider an infinite chain
of $S$-nice subalgebras of $A$ such that their intersection is an $S$-nice subalgebra of $A$ that is strictly contained in each of them.
As an explicit example, let $S= O_v$ be a valuation domain with value group $\mathbb{R}$, and let
$A=M_2(F)$, where $F$ is the field of fractions of $O_v$.
Take $0<r_0 \in \mathbb{R}$ and for all $0< r \leq r_0$ let $I_r=\{x \in O_v \mid v(x) \geq r\}$.
For all $0< r \leq r_0$ denote

\begin{equation*} R_r=
 \left(
\begin{array}{cccccccc}
O_v& I_r  \\
J_1 & O_v \\

\end{array} \right),
\end{equation*}
where $J_1$ is any nonzero ideal of $O_v$. Let $U=\cup_{0< r <r_0} V(R_r)$. Since $\{ I_r \}_{0<r<r_0}$ is a chain, $U$ is closed under finite nonempty intersection; however,
$$\cap_{R \in U} R=\cap_{0< r <r_0} R_r = R_{r_{0}} \notin U.$$

Finally we demonstrate that $(f)$ does not necessarily imply $(e)$. With the notation presented above,
let $J_2$ be an ideal of $O_v$ strictly containing $J_1$; and let

\begin{equation*} R'=
 \left(
\begin{array}{cccccccc}
O_v& I_{r_0}  \\
J_2 & O_v \\

\end{array} \right).
\end{equation*}
Let $U=\cup_{0< r <r_0} V(R_r) \cup V(R')$. Then the unique minimal element of $U$ is $R'$,
but $U$ in not closed under finite nonempty intersection since for all $0< r <r_0$, we have
 $R' \supset R' \cap R_r $; thus, $R' \cap R_r \notin U$

%Let $U=\mathbb S$. Then, by ****, $\cap_{R \in U} R \notin \mathbb S$; but, by definition, $V(\cap_{R \in \mathbb S} R) \subseteq \mathbb S$.
%So, $(c) \nRightarrow (b)$.

%Let $S= \mathbb{Z}$, $A=M_2(\mathbb{Q})$, and denote, for all $n \in \mathbb{N}$, $I_n=2^n \cdot \mathbb{Z}$,

\begin{comment}

\begin{equation*} R_n=
 \left(
\begin{array}{cccccccc}
\mathbb{Z}& I_n  \\
\{0\} & \mathbb{Z} \\

\end{array} \right),
\end{equation*}

and $U=\cup_{n \in \mathbb{N}} V(R_n)$.

Of course $\cap_{n \in \mathbb{N}} I_n=\{0\}$; thus

\begin{equation*} \cap_{n \in \mathbb{N}} R_n=
 \left(
\begin{array}{cccccccc}
\mathbb{Z}& \{0\}  \\
\{0\} & \mathbb{Z} \\

\end{array} \right)
\end{equation*}

which is not in $U$, although $U$ is closed under nonempty finite intersection. So, $(e) \nRightarrow (d)$.

Now, consider

\begin{equation*} R'=
 \left(
\begin{array}{cccccccc}
\mathbb{Z}& 3\mathbb{Z}\\
\{0\} & \mathbb{Z} \\

\end{array} \right),
\end{equation*}

and let $U=\cup_{n \in \mathbb{N}} V(R_n) \cup V(R')$. Then, $U$ contains only one minimal element
but it is not closed to nonempty finite intersection; for example, $R' \cap R_1 \notin U$.
Thus, $(f) \nRightarrow (e)$.

\end{comment}

\end{ex}

\begin{prop} \label{exists a lower bound iff} Let $H$ be a nonempty subset of $\mathbb S$; then

1. There exists a lower bound for $H$ iff the infimum of $ H$ exists.

2. There exists an upper bound for $H$ iff the supremum of $ H$ exists.

\end{prop}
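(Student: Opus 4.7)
The forward implications in both parts are trivial, since any infimum (respectively supremum) is itself a lower (respectively upper) bound. The content of the proposition is the converse, and in each case I would produce an explicit candidate inside $\mathbb{S}$ and then appeal to Lemma \ref{basic lemma}(1) to certify that it lies in $\mathbb{S}$.

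For part 1, suppose $R_0 \in \mathbb{S}$ is a lower bound for $H$, and pick any $R_1 \in H$. The natural candidate is $R^{\ast} = \bigcap_{R \in H} R$. It is a unital subring of $A$ containing $S$, hence an $S$-algebra, and the sandwich $R_0 \subseteq R^{\ast} \subseteq R_1$ lets me invoke Lemma \ref{basic lemma}(1) to conclude that $R^{\ast} \in \mathbb{S}$. By construction $R^{\ast} \subseteq R$ for every $R \in H$, and any lower bound $R' \in \mathbb{S}$ of $H$ satisfies $R' \subseteq R$ for all $R \in H$ and therefore $R' \subseteq R^{\ast}$. Thus $R^{\ast} = \inf H$.

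For part 2, suppose $R_0 \in \mathbb{S}$ is an upper bound for $H$, and again fix some $R_1 \in H$. Let $W \subseteq \mathbb{S}$ be the (nonempty) set of $S$-nice upper bounds of $H$, and set $R^{\ast} = \bigcap_{R' \in W} R'$. For every $R' \in W$ we have $R_1 \subseteq R'$, hence $R_1 \subseteq R^{\ast} \subseteq R_0$, and once more Lemma \ref{basic lemma}(1) forces $R^{\ast} \in \mathbb{S}$. That $R^{\ast} \supseteq R$ for each $R \in H$ is immediate (every $R' \in W$ contains $R$), so $R^{\ast}$ is an upper bound; minimality among upper bounds is built into its definition, giving $R^{\ast} = \sup H$.

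The only potentially subtle point in the whole argument is confirming that the intersection candidates are genuine $S$-algebras properly sandwiched between two elements of $\mathbb{S}$, but this is exactly the hypothesis required by Lemma \ref{basic lemma}(1); no serious obstacle is expected, and I do not foresee needing Zorn's Lemma or the Alexandroff structure of $\mathbb{S}$ beyond the inclusion-is-specialization-order observation made earlier in the section.
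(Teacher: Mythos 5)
Your proposal is correct and follows essentially the same route as the paper: part 1 is the same sandwich argument (intersect $H$ and squeeze it between a lower bound and a member of $H$ via Lemma \ref{basic lemma}(1)), and part 2 is the same construction as well, since the paper simply applies part 1 to the set of upper bounds of $H$, which is exactly your intersection $\bigcap_{R' \in W} R'$ with the sandwich $R_1 \subseteq R^{\ast} \subseteq R_0$ spelled out. No gaps; the only cosmetic difference is that you re-run the sandwich argument directly instead of quoting part 1.
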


\begin{proof} Clearly, the right to left implication in both statements is trivial.
We prove the left to right implication of the first statement. Let $R_0 \in \mathbb S$  be a lower bound of $H$,
let $R_1 = \cap_{R \in H} R$ and let $R_2$ be any element of $H$ (note that $H$ is not empty).
Then by Lemma \ref{basic lemma}, %(if a ring is between two s-nice then it is also s-nice),
$R_1 \in \mathbb S$  and it is clearly
the infimum of $H$.
We prove now the left to right implication of the second statement.
Let $K$ denote the set of all upper bounds of $H$; by assumption, $K$ is not empty.
Since $H$ is not empty, $K$ has a lower bound; indeed, any element of $H$ is a lower bound for $K$.
Thus, by the first statement, the infimum of $K$, $R_3 = \cap_{R \in K} R$, exists in $\mathbb S$.
Now, let $R \in H$, then for every $R' \in K$, we have $R \subseteq R'$; thus, $R \subseteq R_3$.
Therefore, $R_3 \in K$ and $R_3$ is the supremum of $H$.

\end{proof}

We note that the assumption that $H \neq \emptyset$ is crucial.
Indeed, the empty set clearly
has an upper bound but the supremum of $\emptyset$ does not exist, since $\mathbb S$ may not contain
a smallest element, as shown in Example \ref{left implications may not hold}.
Also, the empty set clearly
has a lower bound but the infimum of $\emptyset$ exists iff $\mathbb S$ is irreducible,
as we shall see in Theorem \ref{one maximal in U equivallent conditions}.

Dually to Proposition \ref{a implies b implies c for open U}, let $C \subseteq \mathbb S$ be a closed set. We consider the following properties:

(a) $C=cl \{R \}$ for some $R \in \mathbb S$.

(b) Every nonempty subset of $C$ has a supremum, which belongs to $C$.

(c) $\text{sup} C \in \mathbb{S}$ and $C=cl \{\text{sup} C\}$.

(d) $\text{sup} C \in \mathbb{S}$ and $cl \{\text{sup} C\} \subseteq C$.

(e) $C$ is a sup subsemilattice of $\mathbb S$.%Every nonempty finite subset of $C$ has a supremum, which belongs to $C$; i.e.,

(f) $C$ contains no more than one maximal element.

The proof of the following proposition is quite similar to the proof of Proposition \ref{a implies b implies c for open U}.
The implications $(a) \Rightarrow (b)$, $(c) \Rightarrow (b)$, and $(d) \Rightarrow (e)$ rely on
Proposition \ref{exists a lower bound iff}; we shall not prove it here.

\begin{prop} \label{a implies b implies c for closed C} Notation as above; the following implications hold:

$(a) \Leftrightarrow (b) \Leftrightarrow (c) \Leftrightarrow (d) \Rightarrow (e) \Rightarrow (f)$.
\end{prop}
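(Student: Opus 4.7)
The plan is to dualize the proof of Proposition \ref{a implies b implies c for open U}, substituting suprema for intersections and invoking Proposition \ref{exists a lower bound iff} wherever the earlier proof used Lemma \ref{basic lemma}. The key structural fact I would rely on is that a closed set $C \subseteq \mathbb S$ is precisely a lower set for inclusion, so that for any $R \in \mathbb S$ one has $cl\{R\} = \{R' \in \mathbb S \mid R' \subseteq R\}$. As in the open-set version, I tacitly assume $C \neq \emptyset$ (the empty case being either vacuous or outside the scope of (a), (c), (d)).

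For the cycle $(a) \Rightarrow (b) \Rightarrow (c) \Rightarrow (d) \Rightarrow (c) \Rightarrow (a)$, I would argue as follows. For $(a) \Rightarrow (b)$: if $C = cl\{R\}$ and $\emptyset \neq H \subseteq C$, then $R$ is an upper bound of $H$ in $\mathbb S$, so Proposition \ref{exists a lower bound iff} gives $\text{sup}H \in \mathbb S$, and $\text{sup}H \subseteq R$ places it in $cl\{R\} = C$. For $(b) \Rightarrow (c)$: apply (b) with $H = C$ to obtain $\text{sup}C \in C \subseteq \mathbb S$; since $C$ is a lower set containing $\text{sup}C$ we get $cl\{\text{sup}C\} \subseteq C$, and conversely every $R \in C$ satisfies $R \subseteq \text{sup}C$, so $C \subseteq cl\{\text{sup}C\}$. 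The implications $(c) \Rightarrow (a)$ (take $R = \text{sup}C$) and $(c) \Rightarrow (d)$ are trivial, and $(d) \Rightarrow (c)$ holds because every element of $C$ is contained in $\text{sup}C$, yielding $C \subseteq cl\{\text{sup}C\}$ and hence equality with the inclusion from (d). Alternatively, one can prove $(c) \Rightarrow (b)$ directly by using $\text{sup}C$ as an upper bound for any nonempty $H \subseteq C$ and invoking Proposition \ref{exists a lower bound iff} to produce $\text{sup}H \in \mathbb S$ with $\text{sup}H \subseteq \text{sup}C$, hence in $C$.

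For the tail, $(d) \Rightarrow (e)$: given $R_1, R_2 \in C$, the element $\text{sup}C \in \mathbb S$ is a common upper bound, so Proposition \ref{exists a lower bound iff} produces $\text{sup}\{R_1,R_2\} \in \mathbb S$; it is contained in $\text{sup}C$, hence belongs to $cl\{\text{sup}C\} \subseteq C$. For $(e) \Rightarrow (f)$: two distinct maximal elements $R_1, R_2 \in C$ would produce $\text{sup}\{R_1,R_2\} \in C$ strictly larger than each, contradicting maximality. I do not expect any genuine obstacle; the only subtlety is keeping suprema inside $\mathbb S$, which Proposition \ref{exists a lower bound iff} handles uniformly, playing exactly the dual role of Lemma \ref{basic lemma}(2) in the open-set version.
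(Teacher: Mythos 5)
Your proof is correct and follows exactly the route the paper intends: the paper omits this proof, saying only that it dualizes Proposition \ref{a implies b implies c for open U} with the implications $(a) \Rightarrow (b)$, $(c) \Rightarrow (b)$, and $(d) \Rightarrow (e)$ resting on Proposition \ref{exists a lower bound iff}, which is precisely how you argue (your cycle plus the direct $(c)\Rightarrow(b)$ variant). Your tacit restriction to $C \neq \emptyset$ matches the paper's own implicit convention in the open-set version, so there is no gap.
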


Note the small difference %between Proposition \ref{a implies b implies c for open U} and Proposition \ref{a implies b implies c for closed C};
in the implication $(c) \Rightarrow (b)$, which is valid in Proposition \ref{a implies b implies c for closed C} but not
in Proposition \ref{a implies b implies c for open U}. The reason for this difference is the fact that $V(M)$ is defined for all
$M \subseteq A$ whereas $cl \{ R\}$ is defined only for $R \in \mathbb{S}$. While in condition (c) of Proposition \ref{a implies b implies c for open U},
$ \cap_{R \in U} R$ is not necessarily in $\mathbb{S}$, in conditions (c) and (d) of Proposition \ref{a implies b implies c for closed C} we require that $\text{sup}C$ would be in $\mathbb{S}$.

We also note that condition (e) of Proposition \ref{a implies b implies c for closed C} implies that $C$ is
an ideal of $\mathbb{S}$, in the sense of order theory defined in the introduction.
Indeed, by assumption $C$ is closed and thus it is a lower set. By the assumption in (e),
$C$ is a sup subsemilattice of $\mathbb S$; in particular, $C$ is directed. In fact, in this case $C$ is actually a sublattice of $\mathbb S$.
To show this, let $R_1,R_2 \in C$, the infimum of $R_1$ and $R_2$ is their intersection
which is in $\mathbb{S}$; by assumption $C$ is a closed set and thus it is a lower set, so $\text{inf}\{R_1,R_2\} \in C$;
and by the assumption in condition (e), $C$ is a sup subsemilattice of $\mathbb S$.

\begin{comment}
**********

We also note that condition (e) of Proposition \ref{a implies b implies c for closed C} implies that $C$ is a lattice.
Indeed, let $R_1,R_2 \in C$, the infimum of $R_1$ and $R_2$ is their intersection
which is in $\mathbb{S}$; by assumption $C$ is a closed set and thus it is a lower set, so $\text{inf}\{R_1,R_2\} \in C$.
By the assumption in condition (e), $C$ is a sup semilattice.
In particular, in this case $C$ is also an ideal of $\mathbb{S}$, in the sense of order theory defined in the introduction.

************

\end{comment}
%Indeed, since $C$ is a closed set, it is a lower set; and since
%$C$ is a sup semilattice, it is directed. Moreover,

The following theorem is important to our study.

\begin{thm} \label{irreducible has sup} Let $I $ be an irreducible subset of $ \mathbb S$.
Then $\cup_{R \in I}R$ is an $S$-nice subalgebra of $A$; in particular, $\text{sup}I=\cup_{R \in D}R$.

\end{thm}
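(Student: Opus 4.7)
The plan is to verify that $R_0 := \bigcup_{R \in I} R$ is itself an $S$-nice subalgebra of $A$; the supremum claim then follows because $R_0$ contains every $R \in I$ and any upper bound of $I$ in the inclusion order (which, as noted earlier in this section, is the specialization order) must contain the union. By Lemma \ref{union satisfies basic properties}, conditions (a) and (b) defining ``$S$-nice'' come for free once we know $R_0$ is a ring, and part (c) of that lemma handles $S \cdot R_0 \subseteq R_0$ automatically from closure under addition. So the whole proof reduces to checking that $R_0$ is closed under addition and multiplication; containment of $0$ and $1$, and closure under negation, are immediate since each $R \in I$ is already an $S$-subalgebra.

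The key step is to use irreducibility of $I$ to produce, for any two elements $x, y \in R_0$, a single $R_3 \in I$ containing both. I would pick $R_1, R_2 \in I$ with $x \in R_1$ and $y \in R_2$, and consider the basic open sets $V(\{x\})$ and $V(\{y\})$ of $\mathbb{S}$. Each meets $I$ (at $R_1$ and $R_2$ respectively), so by irreducibility of $I$ (``any two open subsets of $\mathbb{S}$ meeting $I$ also meet $I$ simultaneously'') there exists $R_3 \in V(\{x\}) \cap V(\{y\}) \cap I$. Since $x, y \in R_3$ and $R_3$ is a ring, $x + y, xy \in R_3 \subseteq R_0$, which is what we needed.

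The main conceptual point, and really the only nontrivial ingredient, is recognizing that the condition ``$x \in R$'' on $R \in \mathbb{S}$ is an open condition, cut out by the basic open $V(\{x\})$; once this is noticed, the definition of irreducibility delivers the common witness $R_3$ almost for free, and the remainder of the argument is bookkeeping. No real obstacle arises beyond this single observation, although one must remember to invoke Lemma \ref{union satisfies basic properties} at the end rather than trying to check the $S$-nice conditions on $R_0$ by hand.
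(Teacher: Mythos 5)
Your proposal is correct and is essentially the paper's own argument: the paper also reduces everything to finding, for $a,b \in \bigcup_{R\in I}R$, a single member of $I$ containing both, and then invokes Lemma \ref{union satisfies basic properties}; it phrases the irreducibility step dually, via the closed sets of algebras not containing $a$ (resp.\ $b$), which are exactly the complements of your basic opens $V(\{a\})$, $V(\{b\})$. So the two proofs differ only in using the open-set versus closed-set formulation of irreducibility.
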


\begin{proof} Denote $R_0=\cup_{R \in I}R$. Let $a,b \in R_0$; we prove that there exists
$R_1 \in I$ such that $a,b \in R_1$.
Assume to the contrary that there exists no such $R_1$. %and consider $R_1 \in I$ such that $a \in R_1$ and $R_2 \in I$ such that $b \in R_2$.
Let $G_1 $ denote the set of all elements in $\mathbb S$ not containing $a$,
and let $G_2 $ denote the set of all elements in $\mathbb S$ not containing $b$.
It is clear that $G_1$ and $G_2$ are closed in $\mathbb S$. %indeed, the complement of G_1 is the set of all those containing a which by def of %open set, is open.
However, by our assumption $I \subseteq G_1 \cup G_2$, while $I \nsubseteq G_1$ and $I \nsubseteq G_2$, a contradiction.
Thus, $R_0$ is a ring. By Lemma \ref{union satisfies basic properties}, $R_0 \in \mathbb S$.

\end{proof}

\begin{comment}

I thought that sup I \in I is equivalent to I irreducible but I couldn't prove I irr -> sup \in I only sup \in S

\begin{thm}  \label{equivalent conditions irreducible} Let $I \neq \emptyset $ be a subset of $ \mathbb S$.
The following conditions are equivalent:

(a) $I$ is irreducible;

(b) $\cup_{R \in I}R \in I$;

(c) $I$ has a greatest element.

\end{thm}

\begin{proof} $(a) \Rightarrow (b)$.

\end{proof}

\begin{thm}  \label{irreducible iff has sup} Let $I \neq \emptyset $ be a subset of $ \mathbb S$.
Then $I$ is irreducible iff $\cup_{R \in I}R \in I$.
\end{thm}

\begin{proof} The left to right implication is by Theorem \ref{irreducible has sup}.
We prove $\Leftarrow$. Let $G_1$ and $ G_2$ be closed subsets of $ \mathbb S$ such that $I \subseteq G_1 \cup G_2$ and
let $R_1 \in I$.
%assume to the contrary that $I \nsubseteq G_1$ and $I \nsubseteq G_2$.
%Let $R_1 \in I \setminus G_1$ and let $R_2 \in I \setminus G_2$.
By assumption $\cup_{R \in I}R \in I$; thus,
$\cup_{R \in I}R \in G_1$ or $\cup_{R \in I}R \in G_2$. Assume without loss of generality that $\cup_{R \in I}R \in G_1$;
then, since $G_1$ is closed in $ \mathbb S$, we get $R_1 \in G_1$.
*******************************************continue here********************

\end{proof}

\end{comment}

%We have the following theorem:

\begin{thm} \label{one maximal in U equivallent conditions} Let $U \subseteq \mathbb S$ be a nonempty open set.
The following statements are equivalent:

(a) $U$ has a greatest element.

(b) There exists $R_0 \in \mathbb S$ such that $clU=cl\{R_0\}$.

(c) $U$ has a unique maximal element.

(d) $U$ is irreducible.

(e) $U$ is a sup subsemilattice of $\mathbb S$. %Every two elements in U have a supremum; namely,

(f) Every nonempty subset $H \subseteq U$ has a supremum.

\end{thm}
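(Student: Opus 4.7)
The plan is to establish the six-way equivalence by proving the cycle $(a) \Rightarrow (f) \Rightarrow (e) \Rightarrow (d) \Rightarrow (a)$ together with the side equivalences $(a) \Leftrightarrow (b)$ and $(a) \Leftrightarrow (c)$. Throughout, I would exploit the fact that the specialization order on $\mathbb{S}$ is inclusion, that open sets are upper sets and closed sets are lower sets, and that $cl\{R\} = \{R' \in \mathbb{S} : R' \subseteq R\}$. The two earlier results I expect to lean on heavily are Theorem \ref{irreducible has sup} (for closing the cycle at (d) $\Rightarrow$ (a)) and Proposition \ref{exists a lower bound iff} (to convert existence of an upper bound into existence of a supremum).

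For the routine part of the cycle: $(a) \Rightarrow (f)$ is immediate from Proposition \ref{exists a lower bound iff}, since a greatest element of $U$ is an upper bound for any nonempty $H \subseteq U$. Then $(f) \Rightarrow (e)$ is trivial. For $(e) \Rightarrow (d)$, I would argue by contradiction: if $U \subseteq G_1 \cup G_2$ with $G_i$ closed and $R_i \in U \setminus G_i$, then $\sup\{R_1,R_2\} \in U$ (by the semilattice hypothesis) lies in some $G_j$, and because $G_j$ is a lower set containing $\sup\{R_1,R_2\} \supseteq R_j$ I would get $R_j \in G_j$, a contradiction.

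The main step, and the one I expect to be the load-bearing one, is $(d) \Rightarrow (a)$: by Theorem \ref{irreducible has sup}, $R_0 := \bigcup_{R \in U} R$ lies in $\mathbb{S}$ and is the supremum of $U$; since $U$ is an upper set containing some $R \subseteq R_0$, we conclude $R_0 \in U$, so $R_0$ is the greatest element. For the side equivalences: $(a) \Rightarrow (c)$ is trivial, and $(c) \Rightarrow (a)$ uses Remark \ref{in a maximal chain there exists a maximal element}, which guarantees every $R \in U$ sits below some maximal element of $U$—necessarily the unique one. For $(a) \Rightarrow (b)$, if $R_0$ is the greatest element then $U \subseteq cl\{R_0\}$ and $R_0 \in U \subseteq clU$, giving $clU = cl\{R_0\}$.

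The remaining subtle point, and the one requiring a small argument, is $(b) \Rightarrow (a)$: from $clU = cl\{R_0\}$ I need to recover that $R_0$ itself lies in $U$. The observation is that $R_0 \in cl\{R_0\} = clU$, so $R_0 \subseteq R'$ for some $R' \in U$; but then $R' \in clU = cl\{R_0\}$ forces $R' \subseteq R_0$, hence $R' = R_0 \in U$. Once $R_0 \in U$, every element of $U$ lies in $cl\{R_0\}$ and is therefore contained in $R_0$, so $R_0$ is the greatest element. This closes all implications and establishes the equivalence.
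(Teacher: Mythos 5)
Your proof is correct, but it is organized differently from the paper's. The paper establishes the single cycle $(a) \Rightarrow (b) \Rightarrow (c) \Rightarrow (d) \Rightarrow (e) \Rightarrow (f) \Rightarrow (a)$, in which Theorem \ref{irreducible has sup} enters only at $(d) \Rightarrow (e)$, while the steps $(c) \Rightarrow (d)$ and $(e) \Rightarrow (f)$ both lean on Remark \ref{in a maximal chain there exists a maximal element} (i.e.\ on Zorn's Lemma, producing maximal elements above given ones). You instead prove the shorter cycle $(a) \Rightarrow (f) \Rightarrow (e) \Rightarrow (d) \Rightarrow (a)$ together with the side equivalences $(a) \Leftrightarrow (b)$ and $(a) \Leftrightarrow (c)$. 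Your $(e) \Rightarrow (d)$ is a direct two-closed-sets argument using only that closed sets are lower sets, and your $(d) \Rightarrow (a)$ gets the greatest element immediately by combining Theorem \ref{irreducible has sup} with the fact that open sets are upper sets; this confines the Zorn-type argument to the single implication $(c) \Rightarrow (a)$, whereas the paper's $(c) \Rightarrow (d)$ and $(e) \Rightarrow (f)$ both use it. Your handling of $(b) \Rightarrow (a)$ is also slightly more careful than the paper's corresponding step $(b) \Rightarrow (c)$: you explicitly extract $R_0 \in U$ from $clU$ being the down-closure of $U$, rather than asserting that $R_0$ is a maximal element of $U$. Two cosmetic remarks: in $(f) \Rightarrow (e)$ you should say explicitly that $\sup\{R_1,R_2\}$ lies in $U$ because it contains $R_1$ and $U$ is an upper set (this is covered by the general facts you list at the outset, but it is the only content of that step), and in $(b) \Rightarrow (a)$ the identification $clU=\{T\in\mathbb S \mid T\subseteq R \text{ for some } R\in U\}$ deserves one sentence of justification from the ``closed = lower set'' description. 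Both routes ultimately rest on the same two pillars, Theorem \ref{irreducible has sup} and Proposition \ref{exists a lower bound iff}, so the difference is one of economy and transparency rather than of substance.
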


\begin{proof} We prove $(a) \Rightarrow (b) \Rightarrow (c) \Rightarrow (d) \Rightarrow (e) \Rightarrow (f) \Rightarrow (a)$.
To show that $(a) \Rightarrow (b)$ we denote by $R_0$ the greatest element of $U$.
It is clear that every closed set containing $U$ also contains $R_0$; on the other hand,
since $R_0$ is the greatest element of $U$, by the definition of the topology on $\mathbb S$, every closed set containing  $R_0$
also contains $U$.
%Then, a closed set contains $U$ iff it contains $R_0$
$(b) \Rightarrow (c)$. We prove that $R_0$ is the unique maximal element of $U$.
It is clear that $R_0$ is a maximal element of $U$, since otherwise $U \setminus cl\{R_0\} \neq \emptyset$ and thus $clU \setminus cl\{R_0\} \neq \emptyset$. % otherwise, there is R_1 \supseteq R_0 and then R_1 \in clU but not in clR_0
Similarly, Assuming there exists another maximal element $R_1 \in U$, we get $R_1 \in U \setminus cl\{R_0\}$. %, a contradiction.
%$(a) \Rightarrow (b)$ is trivial.
$(c) \Rightarrow (d)$. Let $R_0$ denote the unique maximal element of $U$.
Assume to the contrary that
$U \subseteq G_1 \cup G_2$ where $G_1 , G_2$ are closed in $\mathbb S$
while $U \nsubseteq G_1$ and $U \nsubseteq G_2$. Let $R_1 \in U \setminus G_1$ and
$R_2 \in U \setminus G_2$. By Remark \ref{in a maximal chain there exists a maximal element},
there exist maximal elements $R_1 ', R_2 ' \in U$  containing $R_1$ and $R_2$, respectively.
Since $R_0$ is the unique maximal element of $U$, we have $R_1 '= R_2 '=R_0$.
Thus, $R_0 \notin G_1$ and $R_0 \notin G_2$, a contradiction. % if $R \in G_1$ since $G_1$ is a lower set we have $R_1 \in G_1$; likewise for $R_2$.
%By the definition of an open set,
%Since $U$ is an open set and $R$ contains
We prove $(d) \Rightarrow (e)$. Let
$R_1, R_2 \in U$; by assumption $U$ is irreducible and thus by Theorem \ref{irreducible has sup}, $R_0=\cup_{R \in U} R$ is
an $S$-nice subalgebra of $A$.
%By the definition of an open set, $R_0 \in U$.
%So, there exists $R_0 \in U$
Clearly, $R_0 $ contains both $R_1$ and $R_2$. Thus, by Proposition \ref{exists a lower bound iff}, $\text{sup}\{R_1 ,R_2 \}$ exists.
Hence, by the definition of an open set, $\text{sup}\{R_1 ,R_2 \} \in U$.
We prove now $(e) \Rightarrow (f)$. %By assumption, every two elements of $U$ have a supremum, thus
%$U$ cannot contain two maximal elements. ***** By Remark \ref{in a maximal chain there exists a maximal element} there
%exists a maximal element*****
Let $\emptyset \neq H \subseteq U$. By Remark \ref{in a maximal chain there exists a maximal element},
for every $R \in H$ there exists a maximal element $T_R \in U$ containing $R$.
By assumption, every two elements of $U$ have a supremum, thus these $T_R$ must all be equal.
So, $H$ is bounded from above and therefore, by Proposition \ref{exists a lower bound iff},
the supremum of $H$ exists.
Finally, we show $(f) \Rightarrow (a)$. By assumption $U$ has a supremum, and since $U$ is an open set, its supremum is its greatest
element. % of $U$. %indeed the sup exists and is greater than some R_1 \in U; since U is open it contains the sup that contains R_1

\begin{comment}
***********
To prove $(b) \Rightarrow (c)$, we show that $U$ is directed.
Let $R$ denote the unique maximal element of $U$ and let $R_1, R_2 \in U$.
%if $R$ contains both $R_1$ and $R_2$, then we are done. Otherwise,
By Remark \ref{in a maximal chain there exists a maximal element},
there exist maximal elements $R_1 ', R_2 ' \in U$  containing $R_1$ and $R_2$, respectively.
Since $R$ is the unique maximal element of $U$, we have $R_1 '= R_2 '=R$. Thus, $U$ is irreducible.
We prove $(c) \Rightarrow (d)$. Let
$R_1, R_2 \in U$; by assumption $U$ is irreducible and is thus directed. So, there exists $R_3 \in U$
containing both $R_1$ and $R_2$. By Proposition \ref{exists a lower bound iff}, $\text{sup}\{R_1 ,R_2 \}$ exists.
Since $U$ is an upper set, $\text{sup}\{R_1 ,R_2 \} \in U$.

****************

\end{comment}

\end{proof}

In view of Theorem \ref{one maximal in U equivallent conditions}, we %discuss now the irreducible components of $\mathbb S$.
%The following proposition
characterize now the irreducible components of $\mathbb S$.

\begin{prop} \label{irreducible components}  $I \subseteq \mathbb S$ is an irreducible component of $\mathbb S$
iff $I=cl \{ R \}$ for some maximal $R \in \mathbb S$.

\end{prop}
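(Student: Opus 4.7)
The plan is to combine Theorem~\ref{irreducible has sup}, which tells us that for any irreducible $I \subseteq \mathbb S$ the union $\cup_{R \in I} R$ is an $S$-nice subalgebra and equals $\sup I$, with the explicit description of closures provided by the specialization order: since open sets are upper sets and closed sets are lower sets (for inclusion), one has $cl\{R_0\} = \{R \in \mathbb S \mid R \subseteq R_0\}$ for every $R_0 \in \mathbb S$. A second ingredient is the standard fact that $cl\{R_0\}$ is itself irreducible (if $cl\{R_0\} \subseteq G_1 \cup G_2$ with $G_i$ closed, then $R_0$ lies in one of them, hence so does its closure).

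For the forward direction, let $I$ be an irreducible component. First, since $cl\, I$ is irreducible whenever $I$ is and contains $I$, the maximality of $I$ among irreducible subsets forces $I = cl\, I$, so $I$ is closed. By Theorem~\ref{irreducible has sup}, $R_0 := \sup I = \cup_{R \in I} R$ lies in $\mathbb S$. Every $R \in I$ satisfies $R \subseteq R_0$, hence $R \in cl\{R_0\}$, so $I \subseteq cl\{R_0\}$; since $cl\{R_0\}$ is irreducible and $I$ is a maximal irreducible subset, we must have $I = cl\{R_0\}$. It remains to argue that $R_0$ is maximal in $\mathbb S$: if some $R_1 \in \mathbb S$ properly contained $R_0$, then $cl\{R_1\}$ would be an irreducible subset of $\mathbb S$ strictly containing $cl\{R_0\} = I$, contradicting the maximality of $I$.

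For the converse, let $R$ be maximal in $\mathbb S$ and set $I = cl\{R\}$, which is irreducible as noted above. Suppose $I \subseteq I'$ with $I' \subseteq \mathbb S$ irreducible; we want $I = I'$. Since $R \in I \subseteq I'$, Theorem~\ref{irreducible has sup} gives $R^* := \sup I' = \cup_{R'' \in I'} R'' \in \mathbb S$ with $R \subseteq R^*$. The maximality of $R$ in $\mathbb S$ forces $R = R^*$, so every $R'' \in I'$ is contained in $R$, i.e.\ $I' \subseteq cl\{R\} = I$. Hence $I$ is an irreducible component.

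The main technical point, really the only one, is the forward direction's identification of $R_0 = \sup I$ as the ``generic point'' of $I$; everything then reduces to the elementary observation that in this Alexandroff setting a closed set equals the union of the principal lower sets of its elements, together with Theorem~\ref{irreducible has sup} to guarantee that the required supremum actually lives in $\mathbb S$. No new existence results are needed beyond what has already been established in the paper.
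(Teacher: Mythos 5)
Your proposal is correct and follows essentially the same route as the paper: both directions hinge on Theorem~\ref{irreducible has sup} to place $\cup_{T \in I} T$ in $\mathbb S$, together with the fact that $cl\{R_0\}$ is the principal lower set of $R_0$ and is irreducible, and maximality (of the component, resp.\ of $R$) to force the identifications. Your converse is phrased slightly more directly (comparing $\sup I'$ with $R$ rather than picking an element outside $cl\{R\}$), but the argument is the same.
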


\begin{proof} Let $R$ be a maximal element of $\mathbb S$. It is clear that $cl \{ R \}$ is irreducible. %directed and is thus
Assume to the contrary that there exists an irreducible set $G \supset cl \{ R \}$. Let $R' \in G \setminus cl \{ R \}$.
Then $R \nsubseteq R'$ and $R' \nsubseteq R$. By Theorem \ref{irreducible has sup}, $\cup_{T \in G}T \in \mathbb S$.
However, $\cup_{T \in G}T$ strictly contains $R$, a contradiction.
%and since $R$ is maximal in $\mathbb S$ there cannot exist $R''$
%containing both $R$ and $R'$, a contradiction to $G$ being irreducible.
On the other hand, let $I \subseteq \mathbb S$ be an irreducible component of $\mathbb S$.
Let $R_0=\cup_{T \in I}T$; by Theorem \ref{irreducible has sup}, $R_0\in \mathbb S$. Thus, $I \subseteq cl \{ R_0\}$. Since $cl \{ R_0\}$ is irreducible
and $I $ is an irreducible component of $\mathbb S$, one has $I=cl \{ R_0\}$.
Now, $R_0$ is maximal in $\mathbb S$, since otherwise there exists $R_0 \subset R_1 \in \mathbb S$, but then $cl \{ R_0 \} \subset cl \{ R_1 \}$,
a contradiction.

\end{proof}

%*****maybe another reference.
Recall from [GHKLMS, Def. O-5.6.] that a topological space $T$ is called sober if for every irreducible closed subset $C$ of $T$,
there exists a unique $t \in T$ such that $C$
is the closure of $t$; i.e., $C$ has a unique generic point.
Also recall that a poset $P$ is called dcpo (directed complete partial order) if
every directed subset of $P$ has a supremum.
It is known (see, for example [GHKLMS, Ex. O-5.15.]) that every sober space is a dcpo, under the specialization order.

%As noted above, by Zorn's Lemma
Note that by Lemma \ref{a maximal element exists in an open=upper set} there exists a maximal $S$-nice
subalgebra of $A$. In [Sa3, discussion after Corollary 3.21] we noted that in case $S$ is a valuation domain of $F$ and $A$ is a field, then the maximal $S$-nice subalgebras of $A$ are precisely the valuation domains (whose valuations
extend $v$) of $A$. So, by Proposition \ref{irreducible components}, the closures of these valuation domains
are precisely the irreducible components of $\mathbb S$.

 We also showed in [Sa3, Example 3.26]  that even in the case of a central
simple $F$-algebra, one can have an infinite ascending chain of
$S$-nice subalgebras of $A$ (even when $S$ is a valuation domain).
For the reader's convenience we present here the example.

\begin{ex} (cf. [Sa3, Example 3.26]) \label{example of an infinite accending chain of s-nice} Let $C$ be a non-Noetherian integral domain with field of fractions
$F$. Let $\{ 0 \} \neq I_{1} \subset I_{2} \subset I_{3} \subset
...$ be an infinite ascending chain of ideals of $C$ and
let $A=M_{n}(F)$. Then

\begin{equation*} R_1=
 \left(
\begin{array}{cccccccc}
C& C & ... & C & I_{1}  \\
C & C & ... & C & I_{1}  \\
%\ \ . \ \ .\\
%\ \ \ \ . \ \ \ \ .\\
%\ \ \ \ \ \ . \ \ \ \ \ \  .  \\
. & . & ... & . & .  \\
. & . & ... & . & .  \\
. & . & ... & . & .  \\
C & C & ... & C & I_{1}  \\
C & C & ... & C & C  \\
\end{array} \right) \subset R_2=  \left(
\begin{array}{cccccccc}
C& C & ... & C & I_{2}  \\
C & C & ... & C & I_{2}  \\
%\ \ . \ \ .\\
%\ \ \ \ . \ \ \ \ .\\
%\ \ \ \ \ \ . \ \ \ \ \ \  .  \\
. & . & ... & . & .  \\
. & . & ... & . & .  \\
. & . & ... & . & .  \\
C & C & ... & C & I_{2}  \\
C & C & ... & C & C  \\
\end{array} \right) \subset
...
\end{equation*}

 is an infinite accending chain of $C$-nice subalgebras
of $A$. So, let $$I=\cup_{n \in \mathbb N} cl \{ R_n\};$$ then $I$ is closed and irreducible with no generic point.

In particular, we have an example in which $\mathbb S$ is not sober. Nevertheless, in a subsequent paper
we will
show that $\mathbb S$ is indeed a dcpo and has some interesting properties from the point of view of domain theory.
%we prove in the following section that $\mathbb S$ is a dcpo.
%*******************

\begin{rem} Note that by Lemma \ref{basic lemma}, $\mathbb S$ is an inf semilattice, where the infimum is actually an intersection of sets.
In particular, taking $U=\mathbb S$ in Theorem \ref{one maximal in U equivallent conditions},
the conditions of the theorem are also equivalent to the condition that $\mathbb S$ is a lattice.
In fact we can say even more. By the definition of an irreducible space, every finite intersection of nonempty open sets
is nonempty. In our case, whenever $\mathbb S$ satisfies the equivalent conditions of Theorem \ref{one maximal in U equivallent conditions},
the intersection of \textbf{all} nonempty open sets of $\mathbb S$ is the singleton $\{ R_0 \}$, where  $R_0$
denotes the greatest element of $\mathbb S$. Moreover, this property is also equivalent to the 
equivalent conditions presented in Theorem \ref{one maximal in U equivallent conditions}.

\end{rem}

Recall (cf. [En, Theorem 16.4]) that a valuation $v$ on a field $K$ is called henselian if $v$ extends uniquely to every algebraic field extension of $K$;
in this case, one also says that the corresponding valuation domain is henselian.
%Recall that the maximal $S$-nice subalgebras of $A$
%are precisely the valuation domains of whose valuation extend
Thus, in view of the previous remark and the discussion before Example \ref{example of an infinite accending chain of s-nice},
we have,

\begin{ex}
If $S$ is an henselian valuation domain, $F$ is its field of fractions and $A$ is an algebraic field extension of $F$,
then $\mathbb S$ is a lattice (viewed from the point of view of order theory), and an irreducible topological space (viewed as an Alexandroff topological space). %Of course, $\mathbb S$ satisfies the equivalent conditions presented in Theorem \ref{one maximal in U equivallent conditions}.

\end{ex}

\end{ex}

\section{Prime ideals}

In this short section we discuss %some connections between the prime spectra of
the prime spectra of $S$-nice subalgebras of $A$ and the subsets of $\text{Spec}(S)$ covered by them.
%$R$ and the subset of Spec$(S)$ of those prime ideals in $S$
%that have a prime ideal of $R$ lying over them.
We assume that $A$ is commutative (in this case whenever $R_1 \subset R_2$ are $S$-subalgebras of $A$
and $Q$ is a prime ideal of $R_2$, then $Q  \cap R_1$ is a prime ideal of $R_1$). % in particular, we use Lemma \ref{going down}.
We also assume that $A$ contains an $S$-stable basis, in order to be able to use the going down lemma for $S$-nice subalgebras of $A$ (Lemma \ref{going down}) and
Theorem \ref{every chain is covered commutative case}.

For a ring $T$, we denote by $\text{Spec}(T)$ the prime spectrum of $T$; i.e., the set of all prime ideals of $T$.
Recall the following definition from [Sa3]:
%let $\mathcal C$ be a chain of prime ideals of $S$ and let $R$ be a faithful $S$-algebra.
%Let $\mathcal D$ be a chain of prime ideals of $R$. We say that $\mathcal D$ covers $\mathcal C$ if for every $P \in \mathcal C$
%there exists $Q \in \mathcal D$ lying over $P$; namely, $Q \cap S=P$.
if for every $P \in \text{Spec}(S)$ there exists $Q \in \text{Spec}(R)$ lying over $P$, we say that $R$ satisfies ``Lying Over" (LO, in short) over $S$.
We denote by $\text{Spec}_R (S)$ the set of all prime ideals of $S$ having a prime ideal of $R$ lying over them;
namely,

$\text{Spec}_R (S)= \{ P \in \text{Spec}(S)  \mid$ there exists $Q \in \text{Spec}(R)$ lying over  $ P\}$.
Note that, by definition $\text{Spec}_R (S) \subset \text{Spec} (S)$ iff $R$ does not satisfy LO over $S$.

%In [Sa3, Theorem 3.24] we show that if $A$ is commutative then for every chain $C$ of prime ideals of $S$,
%there exists $R \in \mathbb S$ having a chain of prime ideals $D$ covering $C$. In fact, we show that
%there exists an infinite descending chain of such $S$-nice subalgebras of $A$.

As usual, we use the term ``almost all" to mean that a property is satisfied to all but finitely many elements.

In the following lemma we show that whenever $R$ does not satisfy LO over $S$, there exists an $S$-nice subalgebra of $A$
whose prime spectrum lies over a larger set of primes of $S$ than the prime spectrum of $R$.

\begin{lem} \label{not LO then there exists a better one}  Let $R_1 $ be an $S$-nice subalgebra of $A$ such that
$\text{Spec}_{R_1} (S) \neq \text{Spec} (S)$,
and let $P \in \text{Spec} (S) \setminus \text{Spec} _{R_1} (S)$. Then there exists $R_1 \supset R \in \mathbb S$
such that $\text{Spec}_{R_1} (S) \subset \text{Spec}_{R} (S)$ and $P \in \text{Spec}_{R} (S)$.

\end{lem}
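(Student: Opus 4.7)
The strategy is to produce an auxiliary $S$-nice subalgebra $R'$ of $A$ whose spectrum already covers $P$, and then take $R := R_1 \cap R'$. The intersection should inherit from $R_1$ its covering of $\mathrm{Spec}_{R_1}(S)$ and from $R'$ its covering of $P$, while being forced to sit strictly below $R_1$ precisely because $R_1$ itself fails to cover $P$.

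First I would invoke Theorem \ref{every chain is covered commutative case}, applied to the one-element chain $\mathcal{C} = \{P\} \subseteq \mathrm{Spec}(S)$, to produce some $R' \in \mathbb{S}$ admitting a prime $Q' \in \mathrm{Spec}(R')$ with $Q' \cap S = P$. (This is where the standing assumption of an $S$-stable basis in this section is used.) Then Lemma \ref{basic lemma}(2) gives immediately that $R := R_1 \cap R' \in \mathbb{S}$.

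The verification that $\mathrm{Spec}_{R_1}(S) \cup \{P\} \subseteq \mathrm{Spec}_R(S)$ is a routine contraction argument made available by the assumption that $A$ is commutative. For any $P'' \in \mathrm{Spec}_{R_1}(S)$, pick a prime $Q$ of $R_1$ lying over $P''$; its contraction $Q \cap R$ is a prime of $R$, and since $S \subseteq R \subseteq R_1$ one has $(Q \cap R) \cap S = Q \cap S = P''$. Applying the same argument to $Q' \subseteq R'$ instead of $Q \subseteq R_1$ shows $P \in \mathrm{Spec}_R(S)$.

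It remains to establish the strict containments. For $R \subsetneq R_1$, I would argue by contradiction: if $R_1 \subseteq R'$, the contraction argument above (applied to $Q'$ and $R_1 \subseteq R'$) would place $P$ in $\mathrm{Spec}_{R_1}(S)$, contradicting the choice of $P$. Hence $R_1 \nsubseteq R'$ and therefore $R = R_1 \cap R' \subsetneq R_1$. Strictness of $\mathrm{Spec}_{R_1}(S) \subsetneq \mathrm{Spec}_R(S)$ is then automatic from $P \in \mathrm{Spec}_R(S) \setminus \mathrm{Spec}_{R_1}(S)$. The only real obstacle is the production of a suitable $R'$, and that is precisely what Theorem \ref{every chain is covered commutative case} supplies; the remainder of the proof is elementary contraction of prime ideals together with Lemma \ref{basic lemma}.
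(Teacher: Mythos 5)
Your proposal is correct and follows essentially the same route as the paper: apply Theorem \ref{every chain is covered commutative case} to the singleton chain $\{P\}$ to get an auxiliary $S$-nice subalgebra with a prime over $P$, intersect with $R_1$ using Lemma \ref{basic lemma}, and contract primes (using commutativity of $A$) to compare spectra. Your explicit verification that $R \subsetneq R_1$ (via the observation that $R_1 \subseteq R'$ would force $P \in \mathrm{Spec}_{R_1}(S)$) is a detail the paper leaves implicit, and it is a welcome addition.
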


\begin{proof} Consider the chain $\{ P\}$. By Theorem \ref{every chain is covered commutative case}, there exists $R_2 \in \mathbb S$ having a prime ideal
lying over $P$. By Lemma \ref{basic lemma}, $R=R_1 \cap R_2$ is an $S$-nice subalgebra of $A$. Now, since $A$ is commutative,
every prime ideal of $R_i$ ($i=1,2$) intersect to a prime ideal of $R$. Thus, $\text{Spec}_{R_1} (S) \subset \text{Spec}_{R} (S)$ with
$P \in \text{Spec}_{R} (S)$.

\end{proof}

\begin{prop} \label{LO iff for almost all}  There exists $R \in \mathbb S$ satisfying LO over $S$ iff there exists $R_1 \in \mathbb S$
whose prime spectrum is lying over almost all prime ideals of $S$; namely, $\text{Spec}(S) \setminus \text{Spec}_{R_1} (S)$ is finite.

\end{prop}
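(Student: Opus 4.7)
The proposition is a biconditional, and the forward direction is essentially vacuous: if $R \in \mathbb S$ satisfies LO over $S$, then $\text{Spec}(S) \setminus \text{Spec}_R(S) = \emptyset$ is certainly finite, so we simply take $R_1 = R$. The content of the statement is the converse, and it should follow from iterated use of Lemma \ref{not LO then there exists a better one}.

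For the converse, suppose $R_1 \in \mathbb S$ satisfies $\text{Spec}(S) \setminus \text{Spec}_{R_1}(S) = \{P_1, P_2, \ldots, P_n\}$, a finite set. The plan is to proceed by induction on $n$. If $n = 0$ then $R_1$ itself already satisfies LO and we are done. Otherwise, apply Lemma \ref{not LO then there exists a better one} to $R_1$ and $P_1$ to produce $R_2 \in \mathbb S$ with $R_2 \subset R_1$, $\text{Spec}_{R_1}(S) \subset \text{Spec}_{R_2}(S)$, and $P_1 \in \text{Spec}_{R_2}(S)$. The key observation is that because $\text{Spec}_{R_2}(S) \supseteq \text{Spec}_{R_1}(S) \cup \{P_1\}$, the finite set of missing primes for $R_2$, namely $\text{Spec}(S) \setminus \text{Spec}_{R_2}(S)$, is contained in $\{P_2, \ldots, P_n\}$ and has strictly smaller cardinality than that of $R_1$.

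Iterating this construction at most $n$ times yields a descending chain $R_1 \supset R_2 \supset \cdots \supset R_{k}$ in $\mathbb S$ (each step guaranteed to stay in $\mathbb S$ by the lemma) such that the missing set eventually becomes empty, meaning $\text{Spec}_{R_k}(S) = \text{Spec}(S)$; that is, $R_k$ satisfies LO over $S$. Since the construction at each step is provided by Lemma \ref{not LO then there exists a better one}, and we use the commutativity of $A$ together with the existence of an $S$-stable basis only through that lemma, the proof requires no additional hypotheses beyond those already in force in this section.

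The argument contains no real obstacle; the main point is simply noticing that the missing prime set strictly shrinks at each application of Lemma \ref{not LO then there exists a better one}, so finiteness of the initial missing set guarantees termination in finitely many steps. A one-line inductive write-up (on the cardinality $n$ of $\text{Spec}(S) \setminus \text{Spec}_{R_1}(S)$) should suffice.
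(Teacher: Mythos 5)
Your proposal is correct and follows essentially the same route as the paper: the forward direction is handled by taking $R_1=R$, and the converse is obtained by iterating Lemma \ref{not LO then there exists a better one} finitely many times until no primes of $\text{Spec}(S)$ are missing. Your write-up is in fact slightly more explicit than the paper's, since you spell out why the process terminates (the missing set $\text{Spec}(S)\setminus\text{Spec}_{R_i}(S)$ strictly shrinks at each step), whereas the paper simply asserts ``after finitely many such steps.''
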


\begin{proof} $(\Rightarrow)$ is trivial; just take $R_1=R$.
$(\Leftarrow)$ Let $R_1$ be an $S$-nice subalgebra of $A$ whose prime spectrum is lying over almost all prime ideals of $S$.
If $R_1$ satisfies LO over $S$ then we are done. Otherwise,
by Lemma \ref{not LO then there exists a better one} there exists $R_1 \supset R_2 \in \mathbb S$
with $\text{Spec}_{R_1} (S) \subset  \text{Spec}_{R_2} (S)$. After finitely many such steps we get $R_n \in \mathbb S$
that satisfies LO over $S$.

\end{proof}

\begin{rem}
Note that if there exists $R \in \mathbb S$ that  satisfies LO over $S$, then for every nonempty closed set $C \subseteq \mathbb S$ there exists $R_1 \in C$ that satisfies LO over $S$; indeed, take any $R_2 \in C$ and denote $R_1 = R \cap R_2$.
\end{rem}

%***I am not familiar with an example that there is no $R \in \mathbb S$ that satisfies LO over $S$. ***

%********
%*****
%In view of the previous remark we prove in the following proposition that
%either there exists $R \in \mathbb S$ that  satisfies LO over $S$ or there exists
%an infinite descending chain of $S$-nice subalgebras of $A$ whose prime spectra lies over
%an infinite ascending chain of subsets of $\text{Spec}(S)$.
%*****

In view of the previous remark, in the following proposition we present equivalent conditions for the non-existence of an $S$-nice subalgebra of $A$
that satisfies LO over $S$.

\begin{prop} The following conditions are equivalent:

(a) There exists no $R \in \mathbb S$ that  satisfies LO over $S$.

(b) There is no maximal subset $Y \subseteq \text{Spec}(S)$ (with respect to inclusion) such that
there exists $R \in \mathbb S$ with  $\text{Spec}_{R} (S)=Y$.

(c) For every $R \in \mathbb S$ there exists an infinite descending chain $\{R_i \}_{i \in I}$ of $S$-nice subalgebras of $A$
such that $R_i \subseteq R$ for all $i \in I$, and $\text{Spec}_{R_i} (S) \subset  \text{Spec}_{R_j} (S)$, whenever $R_j \subset R_i$.

(d) Every nonempty closed subset of $\mathbb S$ contains
an infinite descending chain $\{R_i \}_{i \in I}$ of $S$-nice subalgebras of $A$
such that $\text{Spec}_{R_i} (S) \subset  \text{Spec}_{R_j} (S)$, whenever $R_j \subset R_i$.

\end{prop}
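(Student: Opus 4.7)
The plan is to prove the equivalence via $(a) \Leftrightarrow (b)$ and the cycle $(a) \Rightarrow (c) \Rightarrow (d) \Rightarrow (a)$, using Lemma \ref{not LO then there exists a better one} for the forward constructions and the remark immediately preceding the proposition for the final implication. Throughout, the basic observation is that $R$ satisfies LO over $S$ iff $\text{Spec}_R(S) = \text{Spec}(S)$, the largest possible subset of $\text{Spec}(S)$.

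For $(a) \Leftrightarrow (b)$: if some $R$ satisfies LO then $Y = \text{Spec}(S)$ is trivially a maximal realized set, so (b) implies (a). Conversely, assuming (a), for any realized $Y = \text{Spec}_R(S)$ we have $Y \neq \text{Spec}(S)$; picking any $P \in \text{Spec}(S) \setminus Y$ and applying Lemma \ref{not LO then there exists a better one} yields $R' \subset R$ in $\mathbb S$ with $\text{Spec}_{R'}(S) \supset Y$, so $Y$ is not maximal, giving (b). For $(a) \Rightarrow (c)$: fix any $R \in \mathbb S$ and set $R_1 = R$; recursively, given $R_n$, condition (a) guarantees $\text{Spec}_{R_n}(S) \neq \text{Spec}(S)$, so Lemma \ref{not LO then there exists a better one} supplies $R_{n+1} \subset R_n$ in $\mathbb S$ with $\text{Spec}_{R_{n+1}}(S) \supset \text{Spec}_{R_n}(S)$, producing the required infinite descending chain contained in $R$.

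For $(c) \Rightarrow (d)$: recall that closed sets in $\mathbb S$ are exactly the lower sets under the specialization (inclusion) order, so for any nonempty closed $C$ and any $R \in C$, the chain supplied by (c) satisfies $R_i \subseteq R$ and therefore lies in $C$. For $(d) \Rightarrow (a)$: suppose for contradiction that $R^* \in \mathbb S$ satisfies LO, and apply (d) to the nonempty closed set $C = cl\{R^*\}$, producing a chain $\{R_i\}$ inside $C$ with strictly growing $\text{Spec}_{R_i}(S)$. Each $R_i \subseteq R^*$, so $R_i = R^* \cap R_i$; by the remark preceding the proposition, every such intersection satisfies LO, forcing $\text{Spec}_{R_i}(S) = \text{Spec}(S)$ for all $i$, contradicting the strict inclusions. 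The main subtlety is precisely in this last step: one must recognize that LO propagates downward to every $S$-nice subalgebra sitting below an LO-subalgebra via the intersection identity in the remark, thereby collapsing any candidate chain of strictly growing spectra underneath $R^*$.
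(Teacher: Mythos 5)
Your proposal is correct and follows essentially the same route as the paper: the same cycle of implications driven by Lemma \ref{not LO then there exists a better one}, with only a cosmetic reshuffling (you prove $(a)\Leftrightarrow(b)$ separately and run $(a)\Rightarrow(c)$ directly, where the paper goes $(a)\Rightarrow(b)\Rightarrow(c)\Rightarrow(d)\Rightarrow(a)$). Your treatment of $(d)\Rightarrow(a)$ via the intersection identity $R_i=R^*\cap R_i$ and the preceding remark is just an explicit version of the paper's observation that every element of $cl\{R^*\}$ inherits LO from $R^*$.
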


\begin{proof} We prove $(a) \Rightarrow (b) \Rightarrow (c) \Rightarrow (d) \Rightarrow (a)$.
To show $(a) \Rightarrow (b)$, assume to the contrary that there exists $R_1 \in \mathbb S$ such that $Y=\text{Spec}_{R_1} (S)$
is a maximal subset of $\text{Spec}(S)$. By assumption $\text{Spec}_{R_1} (S) \neq \text{Spec} (S)$.
By Lemma \ref{not LO then there exists a better one} there exists $R_2 \in \mathbb S$ such that
$\text{Spec}_{R_1} (S) \subset \text{Spec}_{R_2} (S)$, a contradiction to the maximality of $Y$.
We prove now $(b) \Rightarrow (c)$. Let $R_1 \in \mathbb S$.
By assumption $\text{Spec}_{R_1} (S) \neq \text{Spec} (S)$, since otherwise $\text{Spec}_{R_1} (S)$ is a maximal subset of
$\text{Spec} (S)$. By Lemma \ref{not LO then there exists a better one} there exists $R_1 \supset R_2 \in \mathbb S$
such that $\text{Spec}_{R_1} (S) \subset \text{Spec}_{R_2} (S)$.
By assumption $\text{Spec}_{R_2} (S) \neq \text{Spec} (S)$; so
again by Lemma \ref{not LO then there exists a better one}, there exists
$R_2 \supset R_3 \in \mathbb S$
such that $\text{Spec}_{R_2} (S) \subset \text{Spec}_{R_3} (S)$. We continue this way to get an infinite
descending chain $\{R_k \}_{k \in \mathbb N}$ of $S$-nice subalgebras of $A$
such that $\text{Spec}_{R_i} (S) \subset \text{Spec}_{R_j} (S)$, whenever $i<j$.
To prove that $(c) \Rightarrow (d)$, let $C$ be a nonempty closed subset of $\mathbb S$ and let $R_1 \in C$.
The result follows by applying the assumption on $R_1$ and recalling the definition of the topology on $\mathbb S$.
Finally, we show $(d) \Rightarrow (a)$. Assume to the contrary that there exists $R_1 \in \mathbb S$ satisfying LO over $S$.
Then the closed subset $cl \{R_1 \}$ contains only elements that satisfy LO over $S$, a contradiction.

%By repeatedly using Lemma \ref{not LO then there exists a better one} we get a chain
%$\{R_i \}_{i \in I}$ in $\mathbb S$, such that all of its elements are contained in $R_1$,
%and $\text{Spec}_{R_i} (S) \subset  \text{Spec}_{R_j} (S)$ whenever $R_j \subset R_i$.
%By the definition of the topology on $\mathbb S$, $\{R_i \}_{i \in I} \subseteq C$.

%This was an earlier version***** If there exists $R \in \mathbb S$ that  satisfies LO over $S$ then the maximal
%subset is, of course, $\text{Spec}(S)$. Assume that property 2 is satisfied, then
%it is clear that there is no such $R$ that  satisfies LO over $S$

%Assume that there is no such $R$ that  satisfies LO over $S$ and let $R_1 \in \mathbb S$.
%By Lemma \ref{not LO then there exists a better one} there exists $R_1 \supset R_2 \in \mathbb S$
%such that $\text{Spec}_{R_1} (S) \subset \text{Spec}_{R_2} (S)$. Since $R_2$ does not satisfy LO over $S$,
%again by Lemma \ref{not LO then there exists a better one}, there exists
%$R_2 \supset R_3 \in \mathbb S$
%such that $\text{Spec}_{R_2} (S) \subset \text{Spec}_{R_3} (S)$. We continue this way to get an infinite
%descending chain $\{R_k \}_{k \in \mathbb N}$ of $S$-nice subalgebras of $A$
%such that $\text{Spec}_{R_i} (S) \subset \text{Spec}_{R_j} (S)$, whenever $i<j$.

\end{proof}

%LO -> no ACC. better - no chain terminates
%if there exists a chain that terminates then LO is equivalent.

%consider the set of all subsets of $\text{Spec}(S)$ which are of the form $\text{Spec}_{R} (S)$ for some $R \in \mathbb S$.
%consider chains of elements of this set.
%no LO -> for every element there exists an infinite ascending chain starting from it.
%LO ->

%There exists an open bijective mapping from $\mathbb S / \sim$ to $ QQ$.
%Every conjugate class of $\mathbb S / \sim$ is a convex set; $I$ is convex if for every $a, b \in I$
%and $c \in P$ such that $a \leq b \leq C$, one has $c \in I$.

\end{document}